\documentclass[oneside, 12pt, a4paper]{amsart}
\usepackage{amsmath}
\usepackage{amssymb}
\usepackage{amsthm}
\usepackage{graphicx}
\usepackage{caption}
\usepackage{tikz}

\usepackage{pict2e}   
\usepackage{epic}
\usepackage{xcolor}
\newcommand{\bn}[2]{\put(#1,#2){\circle*{4}}}
\newcommand{\inb}[2]{\put(#1,#2){\circle*{4}}\put(#1,#2){\circle{13}}}
\newcommand{\elx}[1]{\put(#1,0){\makebox(0,0){$\cdots$}}}
\newsavebox{\dynbox}
\newenvironment{dyn}[4]%
  {\begin{lrbox}{\dynbox}\begin{picture}(#1,#2)(#3,#4)}%
  {\end{picture}\end{lrbox}\ensuremath{\vcenter{\copy\dynbox}}}

\usepackage{amsmath,amssymb,amsfonts}
\usepackage{sseq}
\usepackage{dsfont}
\usepackage{mathtools}
\usepackage{enumerate}
\usepackage{enumitem}
\usepackage{doi}
\usepackage{faktor}
\usepackage[font=small,labelfont=bf]{caption}
\usepackage{eucal}
\usepackage{mathrsfs}
\usepackage{stmaryrd}
\usepackage{verbatim}
\usepackage{manfnt}
\usepackage{array}
\usetikzlibrary{patterns,cd}
\usepackage{layout} 
\usepackage[all]{xy}

\usepackage{faktor}
\usepackage{lua-visual-debug}

\numberwithin{equation}{section}

\let\latexbegin\begin

\def\amsamp{&}

\begingroup
\catcode`\&=13
\gdef\pampmatrix{%
  \begingroup
  \let&\amsamp
  \latexbegin{pmatrix}%
}

\endgroup

\newcommand{\R}{\mathbb{R}}
\newcommand{\F}{\mathbb{F}}
\newcommand{\Z}{\mathbb{Z}}
\newcommand{\C}{\mathbb{C}}

\newcommand{\Q}{\mathbb{Q}}

\newcommand{\Hom}{\operatorname{Hom}}

\newcommand{\co}{\colon\thinspace}

\theoremstyle{plain}

\newtheorem{theorem}{Theorem}[section]

\newtheorem*{theorem*}{Theorem}

\newtheorem{proposition}[theorem]{Proposition}

\newtheorem{lemma}[theorem]{Lemma}

\newtheorem{corollary}[theorem]{Corollary}

\theoremstyle{definition}

\newtheorem*{open*}{Open problem}
\newtheorem*{observation*}{Observation}
\newtheorem*{question*}{Question}
\newtheorem*{task*}{Task}

\newtheorem*{notation*}{Notation}

\newcounter{exercises}
\theoremstyle{remark}
\newtheorem{remark}[theorem]{Remark}

\usepackage[all]{xy}
\SelectTips{cm}{10}
\CompileMatrices

\DeclareMathAlphabet{\mathpzc}{OT1}{pzc}{m}{it}

\title[Torsion primes for spaces of commuting elements]{Torsion primes for spaces of commuting elements in Lie groups}

\author{Simon Gritschacher}
\address{Mathematical Institute,
Ludwig-Maximilians-University,
Theresienstra{\ss}e 39,
80333 Munich,
Germany}
\email{simon.gritschacher@math.lmu.de}
\author{Chris Hone}
\address{Department of Mathematical Sciences,
University of Copenhagen,
Universitetsparken 5,
DK-2100 Copenhagen \O,
Denmark}
\email{cth@math.ku.dk}
\date{\today}

\begin{document}

\maketitle

\begin{abstract}
Let $G$ be a complex reductive group and $C_m(G)_1$ the identity component of the space of $m$-tuples of commuting elements in $G$. We prove that for every $m\geq 2$, the integral singular cohomology of $C_m(G)_1$ has torsion precisely at those primes which divide the order of the Weyl group of $G$. We deduce the analogous statement for compact Lie groups, settling a conjecture of Kishimoto and Takeda, and prove the corresponding result for compact Lie algebras. We prove these results using Smith theory for the conjugation action by semisimple elements of prime power order. \end{abstract}


\section{Introduction}
For a topological group $G$ and an integer $m\geq 1$ we consider the space of $m$-tuples of commuting elements in $G$:
\[
C_m(G)=\{(g_1,\dots,g_m)\in G^m\mid g_ig_j=g_jg_i \textnormal{ for } 1\leq i,j\leq m\}\subseteq G^m\, .
\]
We denote by $C_m(G)_1$ the connected component containing the tuple $(1,\dots,1)$. In this paper we characterise the torsion occurring in the integral cohomology of this space for various topological groups. The first case is that of the complex points of a reductive algebraic group. 

\begin{theorem} \label{thm:mainreductive}
Let $G$ be a complex reductive group, with Weyl group $W$. For $m\geq 2$ and prime $p$, the singular cohomology $H^*(C_m(G)_1,\Z)$ has nontrivial $p$-torsion if and only if $p$ divides the order of $W$.
\end{theorem}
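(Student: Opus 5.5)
We treat the two directions separately. For the ``if'' direction we reduce to the compact case and use known computations. Let $K\subseteq G$ be a maximal compact subgroup. By Pettet--Souto (or Bergeron), $C_m(K)_1\hookrightarrow C_m(G)_1$ is a (strong) deformation retract, so it suffices to treat $C_m(K)_1$. There is a map
\[
\phi\colon K/T\times_W T^m\to C_m(K)_1 ,
\]
and its effect on rational cohomology is known (Baird): $H^*(C_m(K)_1;\Q)\cong (H^*(K/T;\Q)\otimes H^*(T^m;\Q))^W$. To exhibit $p$-torsion when $p\mid |W|$, we choose an element $w\in W$ of order $p$ and look at the corresponding $p$-torsion element in $T$, or more precisely a $p$-element $\sigma$ of $K$ whose centraliser is not a torus of the expected type. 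The plan is to use Smith theory for the conjugation action of the cyclic group $\langle \sigma\rangle$ on $C_m(G)_1$. If $H^*(C_m(G)_1;\Z)$ had no $p$-torsion, then $\F_p$-Betti numbers would equal rational Betti numbers. Smith theory then gives
\[
\sum_i \dim H^i\bigl(C_m(G)_1^{\sigma};\F_p\bigr)\le \sum_i \dim H^i\bigl(C_m(G)_1;\F_p\bigr)
\]
together with the Euler characteristic congruence. The fixed points are $C_m(Z_G(\sigma))\cap C_m(G)_1$, a union of components of commuting tuples in the reductive group $Z_G(\sigma)$. We choose $\sigma$ (e.g.\ for $p\mid |W|$, an element whose centraliser contains many components, or a non-toral elementary abelian configuration) so that this fixed set has too much $\F_p$-cohomology (counting components, each contributing rational cohomology computed via Baird's formula). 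This contradicts the inequality, so torsion must exist.

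The ``only if'' direction is the easier one. Suppose $p\nmid|W|$. Then the transfer shows that $H^*(K/T\times_W T^m;\Z_{(p)})=H^*(K/T\times T^m;\Z_{(p)})^W$, which is torsion-free since $K/T$ and $T^m$ are. Moreover, $\phi$ is a $\Z_{(p)}$-cohomology isomorphism: it is surjective with fibres $\phi^{-1}(x)\cong Z_K(x)_1/T\,/\,W_x$-type spaces, which are $\Z[1/|W|]$-acyclic (flag manifold modulo Weyl group of the centraliser, $W_x\le W$). A Vietoris--Begle argument then yields the isomorphism. Hence no $p$-torsion exists.

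The main obstacle is the ``if'' direction: finding, for every $p\mid |W|$ and every $G$, a $p$-element $\sigma$ whose fixed-point set provably carries more mod $p$ cohomology than the rational total Betti number allows. We would first reduce to simple simply-connected factors and handle central isogenies (via $p$-local equivalences of $C_m(-)_1$ under finite covers of order prime to $p$, plus direct arguments otherwise). We would then try $\sigma$ of order $p$ in $T$ fixed by a $p$-cycle in $W$, comparing Poincaré polynomials via Baird's formula evaluated at $t=1$ (total rank $=|W|^{-1}\sum_w \ldots$) and seeking a strict inequality.
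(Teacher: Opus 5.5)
Your ``only if'' direction is fine and is essentially the paper's argument. It is Baird's theorem (Proposition~\ref{prop:desing}) with $\Z_{(p)}$-coefficients, which the paper phrases as an equality of total ranks over $\Q$ and $\F_p$ (Corollary~\ref{cor:constantrank} and Lemma~\ref{lem:recognition}).

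The ``if'' direction has a genuine gap, and the mechanism you propose cannot close it. You want one $p$-element $\sigma$ whose fixed set $(C_m(G)_1)^\sigma$ has total $\F_p$-cohomology exceeding $2^{mr}$, bounding each component from below by its rational cohomology. The identity component $C_m(Z_G(\sigma)_0)_1$ contributes exactly $2^{mr}$ rationally (Baird's count at $t=1$ always gives $2^{mr}$). So this can only succeed when the fixed set is disconnected, and in type $A$ it never is:
\begin{itemize}
\item Take $G=GL_p(\C)$ or $SL_p(\C)$; already $SL_2(\C)$ with $p=2$ suffices. A non-central $\sigma$ of $p$-power order has at least two eigenvalues, so its centraliser is $\prod_i GL_{n_i}(\C)$, resp.\ its determinant-one subgroup, with all $n_i<p$.
\item The fixed set is the commuting variety of this centraliser. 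It is connected, since commuting tuples are simultaneously diagonalisable. Since $p\nmid\prod_i n_i!$, it has $\F_p$-rank exactly $2^{mr}$.
\item Floyd's inequality (Theorem~\ref{thm:floyd}) then only gives $2^{mr}\leq\dim_{\F_p}H^*(C_m(G)_1,\F_p)$. This is consistent with $C_m(G)_1$ being $p$-torsion free, so no inner $p$-automorphism produces your contradiction.
\end{itemize}
The same happens outside type $A$, e.g.\ for $Sp(3)$ at $p=3$. There every centraliser of a non-central $3$-element is a product of groups $U(k)$ and $Sp(k)$, whose commuting varieties are connected, so your rational lower bound again yields only $2^{mr}$.

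Component counting does work for $PU(p)$. With $\zeta$ a primitive $p$-th root of unity, the image of $\mathrm{diag}(1,\zeta,\dots,\zeta^{p-1})$ has centraliser $\bar{T}\rtimes\Z/p$, the extra factor generated by the cyclic permutation matrix $c$. Tuples $(c,1,\dots,1)$ then give extra components of the fixed set. However, there is no transfer through the $p^m$-sheeted cover $C_m(SU(p))\to C_m(PU(p))_1$, and your ``direct arguments otherwise'' for such isogenies are not supplied.

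Two ideas are missing.
\begin{itemize}
\item \textbf{Smith theory must be used inductively, not for a one-shot contradiction.} $C_m(Z_G(x)_0)_1$ is a component of the fixed set and has the same rational rank $2^{mr}$ as $C_m(G)_1$. Hence $p$-torsion in $C_m(Z_G(x)_0)_1$ forces $p$-torsion in $C_m(G)_1$ (Proposition~\ref{prop:smith}). Choosing $x$ as in Lemmas~\ref{lem:killnode} and~\ref{lem:g2} (deleting a Dynkin node, or a node of the extended diagram for $G_2$ at $p=3$) keeps $p$ dividing the Weyl group order while lowering the dimension. This reduces everything to $G'$ of type $A_{p-1}$.
\item \textbf{The type $A_{p-1}$ base case needs an independent computation,} precisely because Smith theory is blind there. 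The paper produces a $p$-torsion class in $H^{p^2+2}(C_2(U(p))_1,\Z_{(p)})$ from the top-degree $p$-torsion of the unordered flag manifold $U(p)/T\cdot S_p$. It uses Baird's desingularisation, which is a $\Z_{(p)}$-isomorphism relative to the central tuples by Vietoris--Begle, together with the split degeneration of the Serre spectral sequence of $U(p)/T\times_{S_p}T^2\to U(p)/T\cdot S_p$. It then passes to $SU(p)$ and $PU(p)$ through the determinant and central-quotient fibrations.
\end{itemize}
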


Any complex reductive group has a maximal compact Lie subgroup $K$, and any compact Lie group arises in this way. The homotopical properties of these groups are intimately related; the inclusion $K\to G$ is a homotopy equivalence, and by results of Pettet-Souto \cite{PettetSouto}, the corresponding map $C_m(K)_1\to C_m(G)_1$ is also a homotopy equivalence. We therefore deduce the corresponding Lie theoretic statement:

\begin{theorem} \label{thm:mainlie}
For a compact Lie group $K$ and any $m\geq 2$, the torsion occurring in the integral cohomology of $C_m(K)_1$ is precisely at primes dividing the order of the Weyl group of $K$.
\end{theorem}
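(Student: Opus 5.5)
Theorem~\ref{thm:mainlie} follows from Theorem~\ref{thm:mainreductive} by a comparison argument. Let $K$ be a compact Lie group. The first step is to realise $K$ as a maximal compact subgroup of a complex reductive group $G$. Concretely, take $G=K_\C$, the complexification of $K$; this is a complex reductive group, and $K\subseteq G$ is a maximal compact subgroup. This holds even when $K$ is disconnected, since the complexification is defined for arbitrary compact Lie groups, with $\pi_0(G)=\pi_0(K)$.

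Next we compare the Weyl groups. A maximal torus $T\subseteq K$ complexifies to a maximal torus $T_\C\subseteq G$. The inclusion induces an isomorphism
\[
N_K(T)/T \;\cong\; N_G(T_\C)/T_\C ,
\]
so $K$ and $G$ have the same Weyl group $W$. If the paper's convention for the Weyl group of a disconnected group uses $N(T)/T$, this identification still holds, because every component of $G$ meets $K$. If instead the convention is the Weyl group of the identity component, the same identification applies to $K_0$ and $G_0=(K_0)_\C$.

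The third step uses the result of Pettet--Souto \cite{PettetSouto} quoted above: the inclusion $C_m(K)_1\to C_m(G)_1$ is a homotopy equivalence. It therefore induces an isomorphism
\[
H^*(C_m(G)_1,\Z)\;\cong\; H^*(C_m(K)_1,\Z).
\]
Hence for every prime $p$ the two spaces have $p$-torsion in integral cohomology simultaneously or not at all. By Theorem~\ref{thm:mainreductive}, applied for $m\ge 2$, this happens exactly when $p$ divides $|W|$, which proves the claim.

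The only step needing care is the first one, the existence and properties of the complexification for a possibly disconnected $K$. There are two ways to handle it. One can cite the standard Tannakian or Chevalley construction, which gives a faithful representation $K\subseteq U(n)$ and realises $G$ as the Zariski closure of $K$ in $GL_n(\C)$. Alternatively, since $C_m(K)_1$ only involves commuting tuples in the identity component of $K$ (indeed $C_m(K)_1=C_m(K_0)_1$, as it is connected and contains $(1,\dots,1)$), one can reduce to connected $K$ from the start. In the connected case the complexification and the matching of Weyl groups are classical.
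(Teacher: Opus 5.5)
Your proposal is correct and matches the paper's own argument: realise $K$ as a maximal compact subgroup of its complexification $G$, transfer through the Pettet--Souto homotopy equivalence $C_m(K)_1\simeq C_m(G)_1$, and apply Theorem~\ref{thm:mainreductive}. Your reduction to connected $K$ (using $C_m(K)_1=C_m(K_0)_1$) is also how the paper proceeds, and it is the only sensible reading of ``Weyl group'' here: with the $N_K(T)/T$ convention, $K=O(2)$ would be a counterexample, since $C_m(O(2))_1=T^m$ has no torsion while $N_{O(2)}(T)/T\cong\Z/2$.
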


This extends the partial results of Kishimoto-Takeda \cite{KishimotoTakeda1} and proves their conjecture \cite[Conjecture 7.11]{KishimotoTakeda1}. It should be noted that Theorem \ref{thm:mainreductive} also holds for $G$ the group of real points of a reductive algebraic group defined over $\R$, and $W$ the Weyl group of its maximal compact subgroup $K\subset G$. This follows at once from Theorem \ref{thm:mainlie} and the  real version of the homotopy equivalence of Pettet-Souto \cite{PettetSouto}.

Our methods also yield the corresponding statement for the compactly supported cohomology of the space of commuting $m$-tuples in a compact real Lie algebra $\mathfrak{g}$:
\[C_m(\mathfrak{g})=\{(X_1,\dots,X_m)\in \mathfrak{g}^m\mid [X_i,X_j]=0 \textnormal{ for } 1\leq i,j\leq m\}\subseteq \mathfrak{g}^m\, .\]

\begin{theorem} \label{thm:mainliealgebra}
Let $\mathfrak{g}$ be a compact real Lie algebra. Then, for all $m\geq 2$, the integral compactly supported cohomology of $C_m(\mathfrak{g})$ has torsion precisely at the primes dividing the order of the Weyl group of $\mathfrak{g}$.
\end{theorem}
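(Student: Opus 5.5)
We reduce Theorem \ref{thm:mainliealgebra} to the group case in two steps: first identify $C_m(\mathfrak{g})$, via scaling and one-point compactification, with a space built from commuting tuples in a compact Lie group; then treat the two directions of the ``precisely'' statement separately.

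Write $\mathfrak{g}=\mathfrak{z}\oplus\mathfrak{k}$ with $\mathfrak{z}$ central and $\mathfrak{k}$ semisimple, and let $K$ be the simply connected compact group integrating $\mathfrak{k}$. Then $C_m(\mathfrak{g})=\mathfrak{z}^m\times C_m(\mathfrak{k})$. Compactly supported cohomology of a product with $\R^N$ is only a degree shift, so we may assume $\mathfrak{g}=\mathfrak{k}$ is semisimple. The space $C_m(\mathfrak{k})$ is a cone: it is invariant under scaling by $\R_{>0}$. Hence $H^*_c(C_m(\mathfrak{k}))\cong \tilde H^*(C_m(\mathfrak{k})^+)$, and $C_m(\mathfrak{k})^+$ is the unreduced suspension of the link $L=C_m(\mathfrak{k})\cap S(\mathfrak{k}^m)$. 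So we must show that $\tilde H^*(L;\Z)$ has $p$-torsion exactly when $p\mid |W|$.

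We then use the standard description $C_m(\mathfrak{k})=K\times_{N(T)} \mathfrak{t}^m$, which holds as a quotient of $K/T\times_W\mathfrak{t}^m$ (every commuting tuple lies in a common Cartan subalgebra). This gives
\[
H^*_c(C_m(\mathfrak{k});\F_p)\cong H^*_c(K/T\times\mathfrak{t}^m;\F_p)^W
\]
rationally, and also mod $p$ whenever $p\nmid |W|$. It follows that for $p\nmid|W|$ there is no $p$-torsion. More precisely, we compare Betti numbers over $\Q$ and over $\F_p$: transfer shows that both are computed by $W$-invariants of the torsion-free group $H^*(K/T)\otimes \tilde H^*((\mathfrak{t}^m)^+)$.

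For the converse we mimic the Smith theory argument used for the group case. Let $p\mid |W|$ and choose an element $x\in K$ of order $p^k$ whose centralizer has a strictly smaller Weyl group, for instance an element of a torus of order $p$ for which the root system of $Z_K(x)$ drops rank in the Weyl group. The conjugation action of $\langle x\rangle$ on $C_m(\mathfrak{k})^+$ has fixed points $C_m(\mathfrak{z}_{\mathfrak{k}}(x))^+$. If $H^*_c(C_m(\mathfrak{k});\Z)$ had no $p$-torsion, its mod $p$ Betti numbers would equal the rational ones. Smith inequalities, $\sum\dim H^*(X^{x};\F_p)\le \sum \dim H^*(X;\F_p)$, together with an Euler characteristic and rank comparison, would then force a contradiction. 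The point is that the rational cohomology is computable via the invariants above, and the fixed set has total mod $p$ dimension exceeding the rational count of the ambient space. This requires induction on $\dim\mathfrak{k}$ and an explicit count of $\dim H^*(K/T\times \mathfrak{t}^m)^W$ against the fixed-point side.

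The main obstacle is this last comparison: exhibiting $x$ for which the Smith inequality is violated precisely when $p\mid|W|$, and handling $m\ge2$ (for $m=1$ there is no torsion). We expect the key input to be that for $m\ge 2$ the rational invariants $(H^*(K/T)\otimes H^*(\mathfrak{t}^{m}))^W$ are strictly smaller than what the fixed component forces. We would first try $x$ a $p$-element normalizing a $p$-Sylow of $W$ and compare total Betti numbers directly.
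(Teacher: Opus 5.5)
Your treatment of primes $p\nmid |W|$ is fine and agrees with the paper. The paper quotes the fact that $C_m(\mathfrak{k})^+$ is a $k$-homology sphere whenever the characteristic of $k$ does not divide $|W|$. Note that $K/T\times_W\mathfrak{t}^m\to C_m(\mathfrak{k})$ is only a surjection, so this comparison needs a Vietoris--Begle argument with $\Z[1/|W|]$-acyclic fibres, not an identification of spaces.

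The converse direction has a genuine gap. Smith theory only transports $p$-torsion \emph{upwards}, from the fixed set $C_m(\mathfrak{z}_{\mathfrak{k}}(x))^+$ to $C_m(\mathfrak{k})^+$; it never creates torsion. By your own transfer computation, $C_m(\mathfrak{k})^+$ and every $C_m(\mathfrak{z}_{\mathfrak{k}}(x))^+$ are rational homology spheres. So no comparison of rational ranks can give a contradiction, and neither can Euler characteristics, which do not depend on the coefficient field. The total $\F_p$-dimension of the fixed set exceeds the rational count only if the fixed set already has $p$-torsion. Hence your induction has no base case, and nothing in it ever produces torsion. A sanity check confirms this: every step you describe works verbatim for $m=1$, where $C_1(\mathfrak{k})^+=\mathfrak{k}^+$ is a sphere and the statement is false. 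An argument that never uses $m\ge 2$ concretely cannot be complete.

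Two ingredients are missing.

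\textbf{The choice of $x$.} ``Strictly smaller Weyl group'' is not the right criterion. You need $x$ non-central of $p$-power order with $p$ \emph{still} dividing $|W(Z_K(x))|$, so that the inductive hypothesis applies to the centraliser. The paper first makes $x$ regular in the other simple factors. It then deletes a suitable node of the Dynkin diagram via $x=u_i(\zeta)$, with $u_i$ a multiple of $\omega_i^\vee$ and $\zeta$ of large $p$-power order; for $G_2$ at $p=3$ it uses the extended diagram. This fails only in type $A_{p-1}$, where every non-central $p$-element has centraliser Weyl group $\prod S_{k_i}$ with all $k_i<p$. So the induction necessarily ends at $\mathfrak{su}_p$, which must be computed by hand.

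\textbf{The base case $\mathfrak{su}_p$.} With $\Z_{(p)}$-coefficients, Vietoris--Begle identifies the link of $C_m(\mathfrak{su}_p)$ with the sphere bundle $SU(p)/T\times_{S_p}S(\mathfrak{t}^m)\to SU(p)/T\cdot S_p$. This works because the fibres over non-zero tuples are products of unordered flag manifolds $U(k)/T\cdot S_k$ with $k<p$, which are $\Z_{(p)}$-acyclic. The Serre spectral sequence has only the rows $0$ and $m(p-1)-1$, so every differential lands in horizontal degree at least $m(p-1)$. For odd $p$, the $p$-torsion class of $SU(p)/T\cdot S_p$ in degree $2p-3$ (Guerra--Janna) therefore survives, precisely because $2p-3<m(p-1)$; this is where $m\ge 2$ enters. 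For $p=2$ the base is $\R P^2$, and one checks that a $2$-torsion class survives in bidegree $(2,0)$ if $m\ge 3$ and in bidegree $(2,1)$ if $m=2$.

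Your closing suggestion, a $p$-element related to a Sylow $p$-subgroup of $W$ together with a rank comparison of the invariants $(H^*(K/T)\otimes H^*_c(\mathfrak{t}^m))^W$, addresses neither point. Those invariants are one-dimensional in every case.
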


These methods break down, however, for spaces of commuting tuples in a complex reductive Lie algebra, see Proposition \ref{prop:complexreductiveliealgebra}.

For the spaces $C_m(G)_1$, it is well known that the only torsion potentially occurring is at primes dividing the Weyl group order, so the difficulty lies in showing that enough torsion actually occurs.

In essence, our proof of Theorem \ref{thm:mainreductive} is an induction over the dimension of $G$, where in each inductive step we pass to a suitable centraliser subgroup and our control over $p$-torsion comes from Smith theory. More precisely, our strategy is based on the following facts, where (2) is specific to the spaces $C_m(G)_1$:

\begin{enumerate}
    \item The space $C_m(G)_1$ has $p$-torsion in its integral cohomology if and only if the dimension of its total $\F_p$-cohomology is larger than that of its total rational cohomology.
    \item For any semisimple element $x$ in $G$, with identity component of its centraliser $Z_G(x)_0$, the dimensions of the total rational cohomologies agree:
    \[\dim_\Q H^*(C_m(Z_G(x)_0)_1,\Q)=\dim_\Q H^*(C_m(G)_1,\Q)\, .\]
    
    \item For an order $p^k$ automorphism of $C_m(G)_1$, the dimension of the total $\F_p$-cohomology of its fixed points is at most that of the total $\F_p$-cohomology of $C_m(G)_1$. Therefore, for conjugation by a semisimple element $x$ of $G$ of order $p^k$, we have: 
    \[\dim_{\F_p}H^*(C_m(Z_G(x)_0)_1,\F_p)\leq \dim_{\F_p} H^*(C_m(G)_1,\F_p)\, .\]
\end{enumerate}

With these, we reduce to the situation where $G'$ is simple of type $A_{p-1}$, and we exhibit torsion in this case explicitly. Though this result for $SU(p)$ is known due to \cite{KishimotoTakeda1}, we use a different method, as the degeneration of the spectral sequence (Proposition \ref{prop:ss}) may be of independent interest.

Our results contrast with the following classical torsion characterisation of Borel \cite{Borel}, that for simply connected $G$, $p$-torsion occurs in the cohomology of $C_1(G)=G$ if and only if $p$ divides a coroot integer of $G$. In particular, one has no torsion in type $A$. Our argument shows that not only is there plenty of torsion in the cohomology of $C_m(G)_1$ for $m\geq 2$, but also that it all has type $A$ origin. 

\section*{Acknowledgments}

The second author is supported by the Danish National Research Foundation through the Copenhagen Centre for Geometry and Topology (DNRF151). This work was initiated during the workshop ``Cohomology of Finite Groups: Interactions and Applications'' (workshop no. 2609) at the Mathematisches Forschungsinstitut Oberwolfach. The authors gratefully acknowledge the institute's hospitality and support.

\section*{AI Disclosure}

Generative AI tools were not used to generate the ideas or text within this paper, and were used for editing purposes only.

\section{The space of commuting elements}

In this section we recall some topological properties of the spaces $C_m(G)_1$ that we will require. In the introduction we have already mentioned the following theorem of Pettet-Souto \cite{PettetSouto}:

\begin{theorem} \label{thm:pettetsouto}
    Let $G$ be the group of complex or real points of a reductive algebraic group, defined over $\R$ in the latter case. For $K\subset G$ a maximal compact subgroup, the inclusion \[C_m(K)_1\to C_m(G)_1\] admits a deformation retraction. In particular, it is a homotopy equivalence.
\end{theorem}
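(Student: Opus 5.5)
The statement is the Pettet--Souto theorem, and I would establish it via the Cartan decomposition together with a fibrewise retraction argument. I treat the complex case first, writing $G=K\exp(i\mathfrak{k})$. Every commuting tuple in $C_m(G)_1$ lies in a suitable abelian subgroup, and the aim is to push the ``noncompact part'' of each element to zero in a way that is canonical, continuous, and preserves commutativity.

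\emph{Step 1.} Reduce to tuples of semisimple elements. The tuples consisting of pairwise commuting semisimple elements form a subspace $C_m(G)_1^{ss}$, and the map sending a tuple to the tuple of semisimple parts of its Jordan decompositions is not continuous. So instead I would use the standard trick of a one-parameter degeneration. Choose a cocharacter-type contraction to the closed orbit: by Kempf--Ness / Richardson, each $G$-orbit in $C_m(G)$ (under simultaneous conjugation) contains a closed orbit in its closure. The Kempf--Ness set of tuples minimizing $\sum_i |g_i|^2$ on their orbit, with respect to a $K$-invariant norm, deformation retracts onto the whole space. This is the Neeman--Schwarz theorem applied to the $G$-variety $C_m(G)\subset G^m$, which is $K$-equivariantly embedded in an affine representation. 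Hence $C_m(G)$ deformation retracts onto its Kempf--Ness set $\mathcal{M}$. In $\mathcal{M}$ every tuple has a closed orbit, so the tuple consists of commuting semisimple elements whose components are normal with respect to the Cartan involution.

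\emph{Step 2.} Retract the Kempf--Ness set onto $C_m(K)$. A commuting tuple of ``normal'' semisimple elements can be written as $g_j=k_j\exp(iX_j)$ with all $k_j\in K$ and $X_l\in\mathfrak{k}$ pairwise commuting. This follows because the tuple lies in a $\theta$-stable maximal torus $T_\C=T\exp(i\mathfrak{t})$ up to $K$-conjugacy. The homotopy $t\mapsto (k_j\exp(itX_j))_j$ for $t\in[0,1]$ preserves commutativity and is continuous, since polar decomposition is a homeomorphism. Composing the homotopies of Steps 1 and 2 gives a deformation retraction of $C_m(G)$ onto $C_m(K)$. Restricting to identity components yields the statement about $C_m(K)_1\to C_m(G)_1$.

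\emph{Step 3: the real case.} For $G$ the real points, use the real Cartan decomposition $G=K\exp(\mathfrak{p})$ together with the real version of Kempf--Ness due to Richardson--Slodowy. A closed-orbit tuple then decomposes as a compact part times a commuting hyperbolic part $\exp(X_j)$ with $X_j\in\mathfrak{p}$, and this part is scaled to $0$ in the same way.

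The main obstacle is Step 1. One must verify that the Neeman--Schwarz retraction, defined by a gradient flow of the norm-square of the moment map, preserves the closed singular subvariety $C_m(G)$. It does, because the flow stays within $G$-orbit closures, and orbit closures of points of $C_m(G)$ stay in $C_m(G)$ since it is closed and $G$-stable. One must also check that $C_m(G)$ is a closed $G$-stable subvariety of an affine $G$-representation, for instance via an embedding $G\hookrightarrow GL_N$.
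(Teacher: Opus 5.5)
The paper gives no proof of this statement. It is quoted from Pettet--Souto, so your argument is an independent, GIT-based route rather than a reconstruction of anything in the text. Its overall architecture is sound, with one step that needs a different argument.

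\textbf{Why the architecture works.} Embed $G$ self-adjointly with $K=G\cap U(N)$ and $G^m$ closed in the ambient representation. For example take $G\subset SL_N$, or use $g\mapsto(g,g^{-1})$; $GL_N$ itself is not closed in $M_N$. The Neeman--Schwarz flow preserves $C_m(G)$, because its flow lines stay in $G$-orbits and its limits lie in orbit closures. It also fixes $\mathcal{M}\supset C_m(K)$ pointwise. Concatenating it with your polar scaling gives a $K$-equivariant strong deformation retraction of all of $C_m(G)$ onto $C_m(K)$. Since points never leave their path component, $C_m(K)\cap C_m(G)_1=C_m(K)_1$, and the statement for identity components follows. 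The advantage of this route is that the basic difficulty, the discontinuity of the Jordan decomposition, is absorbed entirely into Neeman's continuity theorem for the gradient-flow limit. The price is that this analytic input, and its real counterpart, is used as a black box.

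\textbf{The step that fails as justified.} You assert that a tuple in $\mathcal{M}$ lies, up to $K$-conjugacy, in a $\theta$-stable maximal torus. This is false in general, because commuting semisimple elements need not lie in a common maximal torus. In $PGL_2(\C)$ the classes of $\mathrm{diag}(1,-1)$ and of the permutation matrix of the transposition commute, but any lifts anticommute, so they are not simultaneously diagonalisable. Pairing them with a non-unitary central element in $PGL_2(\C)\times\C^\times$ gives tuples in $\mathcal{M}\setminus C_m(K)$ that lie in no maximal torus. You run Step 2 on all of $\mathcal{M}$, and even on $\mathcal{M}\cap C_m(G)_1$ the torus property is only known a posteriori, so this must be repaired.

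\textbf{The repair.} Work with the Zariski closure $A$ of $\langle g_1,\dots,g_m\rangle$ instead of a torus.
A closed orbit forces $A$ to be diagonalisable (Richardson). Its maximal compact subgroup is Zariski dense in $A$. Conjugating that subgroup into $K$ gives $h\in G$ with $hAh^{-1}$ self-adjoint. Hence $y=h\cdot x$ consists of commuting normal matrices, and so $y\in\mathcal{M}$. The uniqueness half of Kempf--Ness, $G\cdot x\cap\mathcal{M}=K\cdot x$, then gives $x\in K\cdot y$. That uniqueness statement is exactly what upgrades $G$-conjugacy to $K$-conjugacy, and your sketch uses it silently.

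The upshot is that $\mathcal{M}$ is precisely the set of commuting tuples of normal elements of $G$. Such tuples are simultaneously unitarily diagonalisable, so all the $k_j$ and $X_l$ commute. Your homotopy $(k_j\exp(itX_j))_j$ is then well defined without any torus.

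\textbf{The real case.} The same repair works over $\R$. Use Mostow's theorem for self-adjoint real algebraic groups, the fact that $p^t\in G$ for positive definite $p\in G$, and Birkes' theorem to pass from closed real orbits to closed complex orbits, hence to semisimplicity. Beyond the Richardson--Slodowy theory of minimal vectors, you must also cite the real analogue of the Neeman--Schwarz deformation retraction onto the set of minimal vectors. That is a separate statement from the closed-orbit and uniqueness properties.
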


This theorem allows one to freely work in the algebraic or compact Lie group context when analysing the spaces $C_m(G)_1$. 

From here on, and if not said otherwise, $G$ will denote a complex reductive group, and $K$ a compact Lie group. Notice that $C_m(G)_1$ automatically picks out the identity component $G_0\subset G$ (and similarly for $C_m(K)_1$), and so we may assume without loss of generality that our groups are connected.

From a point set perspective, the spaces $C_m(G)_1$ and $C_m(K)_1$ are nice, being semialgebraic sets. They are metrisable, hence paracompact Hausdorff, finitistic, of finite covering dimension, and are homotopy equivalent to finite CW complexes. They are also locally contractible, so \v{C}ech and singular cohomology agree, and the total rank of their cohomology is finite over any field.

The spaces $C_m(G)_1$ are typically singular, though in the case of compact $K$, the space $C_m(K)_1$ has a canonical desingularisation, induced by conjugating commuting $m$-tuples lying in a fixed maximal torus $T$. The following result is due to Baird \cite{Baird}:

\begin{proposition}\label{prop:desing}
The conjugation map $K\times T^m\to C_m(K)_1$ induces a proper surjective map from the smooth manifold quotient: \[K/T\times_W T^m\to C_m(K)_1.\] This map is a homeomorphism over the dense subset of $C_m(K)_1$ of $m$-tuples containing a regular element, and it induces an isomorphism in cohomology with coefficients in a field $k$ of characteristic coprime to $|W|$.
\end{proposition}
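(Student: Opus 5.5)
We prove Proposition \ref{prop:desing} in three steps. First we show that the conjugation map $\phi\colon K/T\times_W T^m\to C_m(K)_1$, $[kT,t_1,\dots,t_m]\mapsto (kt_1k^{-1},\dots,kt_mk^{-1})$, is well defined, proper and surjective. Then we show it is a homeomorphism over the regular locus. Finally we compare cohomology by a transfer argument.

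\textbf{Well-definedness, smoothness, properness.} The map is well defined because $W=N(T)/T$ acts diagonally on $K/T$ (on the right) and on $T^m$ (by conjugation). The $W$-action on $K/T$ is free, so $K/T\times_W T^m$ is a smooth compact manifold, and the map is automatically proper since the source is compact. For surjectivity we use the known fact that for connected compact $K$, a commuting tuple lies in the identity component $C_m(K)_1$ exactly when it lies in a common maximal torus. One direction is immediate, because the image of $\phi$ is connected and contains $(1,\dots,1)$. For the other direction we note that the set of tuples lying in a common torus is the image of $K\times T^m$, hence closed. We would then show it is also open in $C_m(K)$ by induction on $m$ using centralisers: if $(g_1,\dots,g_m)$ lies in $T$, then nearby commuting tuples lie in $Z(g_1')_0$, and by a slice argument that group contains a conjugate of $T$. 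This step is standard but fiddly; we would cite Baird or the literature on commuting tuples for it.

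\textbf{Homeomorphism over the regular locus.} Suppose some $g_i$ in the image tuple is regular. Then $g_i$ lies in a unique maximal torus $kTk^{-1}$, since the identity component of its centraliser is that torus. Every $g_j$ commutes with $g_i$ and lies in a common torus with it; that torus must be $kTk^{-1}$. So the coset $kT$ is determined up to $N(T)$, which shows that fibres over this locus are single points. Hence $\phi$ restricts to a continuous proper bijection onto this open dense set, and is therefore a homeomorphism there. Density holds because regular elements are dense in $T$.

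\textbf{Cohomology isomorphism.} We would use the map $\Phi\colon K/T\times T^m\to C_m(K)_1$, which factors through the $W$-quotient. Over $k$ with $\operatorname{char} k$ coprime to $|W|$, the quotient identifies $H^*(K/T\times_W T^m;k)\cong H^*(K/T\times T^m;k)^W$. The strategy is to show that $\phi^*$ is an isomorphism via the Vietoris--Begle theorem with a transfer. Over a point $x\in C_m(K)_1$, the fibre $\phi^{-1}(x)$ should be identified as $Z(x)_0/T'\times_{W_x}(\text{finite})$, or more concretely as a quotient of the flag variety $Z_K(x)/T$ by the finite group $W_{Z(x)}$. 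Its $k$-cohomology is $H^*(Z/T;k)^{W_Z}$. Because $H^*(Z/T;\Q)$ is the regular representation of $W_Z$, and the same holds over $k$ when $|W|$ is invertible, this invariant ring is $k$ in degree $0$. Thus all fibres are $k$-acyclic. Since $\phi$ is proper between compact Hausdorff (locally contractible) spaces, the Vietoris--Begle mapping theorem gives the isomorphism on \v{C}ech cohomology, which agrees with singular cohomology here. The main obstacle is the precise identification of the fibres, including the components of $Z(x)$ when $Z(x)$ is disconnected, and checking that their $W$-invariant cohomology is trivial; this is where coprimality to $|W|$ is used.
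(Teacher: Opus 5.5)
Your proposal is correct and follows essentially the argument of Baird, which the paper cites for this proposition rather than reproving: surjectivity from the fact that $C_m(K)_1$ consists of tuples lying in a common maximal torus, injectivity over the regular locus, and the Vietoris--Begle theorem applied to $k$-acyclic fibres (the same mechanism the paper reuses for $\tilde{\pi}$ in the $U(p)$ computation). The fibre identification you flag as the main obstacle is in fact immediate: for $x\in T^m$, conjugacy of maximal tori in $Z(x)_0$ gives $\phi^{-1}(x)\cong Z(x)_0N(T)/N(T)\cong Z(x)_0/N_{Z(x)_0}(T)$, which is connected regardless of $\pi_0 Z(x)$ and has $k$-cohomology $H^*(Z(x)_0/T;k)^{W(Z(x)_0)}=k$ because $W(Z(x)_0)\leq W$.
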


\begin{remark}
Although $K/T$ is diffeomorphic to the complex projective flag variety $G/B$, the action by $W$ is not algebraic; it does not preserve the complex structure, as the induced representation on top cohomology is the sign representation of $W$.
\end{remark}

Ramras and Stafa observed in \cite[Corollary 5.2]{RamrasStafa} that the total rank of the rational cohomology of $C_m(K)_1$ is $2^{mr}$, where $r$ is the rank of $K$. So, it depends only on $m$ and on the dimension of a maximal torus.

\begin{corollary} \label{cor:constantrank}
Let $G$ have rank $r$, and let $k$ be a field of characteristic coprime to $|W|$. Then the total dimension of  $H^*(C_m(G)_1,k)$ is $2^{mr}$. For $x$ semisimple in $G$, with identity component of the centraliser $Z_G(x)_0$, we therefore have an equality of total dimensions:
\[\dim_k H^*(C_m(G)_1,k)=\dim_k H^*(C_m(Z_G(x)_0)_1,k)=2^{mr}.\]
\end{corollary}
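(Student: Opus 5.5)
We reduce to the compact case, compute the cohomology of Baird's desingularisation, and then check that the rank of $Z_G(x)_0$ equals that of $G$.

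\textbf{Reduction to compact $K$.} By Theorem \ref{thm:pettetsouto}, $C_m(G)_1\simeq C_m(K)_1$ for a maximal compact subgroup $K\subset G$. Since $K$ and $G$ have the same Weyl group $W$, it suffices to work with $C_m(K)_1$ and the field $k$ with $\operatorname{char} k$ coprime to $|W|$.

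\textbf{Cohomology of the desingularisation.} By Proposition \ref{prop:desing},
\[H^*(C_m(K)_1,k)\cong H^*(K/T\times_W T^m,k).\]
The space $K/T\times T^m$ is a free $W$-space, since $W$ acts freely on $K/T$. The order $|W|$ is invertible in $k$, so the transfer gives
\[H^*(K/T\times_W T^m,k)\cong \bigl(H^*(K/T,k)\otimes H^*(T,k)^{\otimes m}\bigr)^W.\]
For a field with $|W|$ invertible, $H^*(K/T,k)$ is the regular representation $k[W]$ as an ungraded $W$-module. Over $\Q$ this is classical, via the coinvariant algebra. In general it holds because $H^*(K/T,\Z)$ is free and the character is determined rationally. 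Since $|W|$ is invertible, modular representations lift, so the $k[W]$-module structure is determined by the Brauer character, which agrees with that of the regular representation.

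Then $(k[W]\otimes V)^W\cong V$ as vector spaces for any $W$-representation $V$. Hence the total dimension is
\[\dim_k H^*(T^m,k)=2^{mr}.\]
This is the Ramras--Stafa count, and over a field with $|W|$ invertible it is the same argument.

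\textbf{The centraliser.} Let $x\in G$ be semisimple. Then $x$ lies in a maximal torus of $G$, so $Z_G(x)_0$ is a connected complex reductive group containing that maximal torus, and hence has rank $r$. Its Weyl group $W_x$ is a subgroup of $W$, so $|W_x|$ divides $|W|$ and $\operatorname{char} k$ is coprime to $|W_x|$ as well. Applying the first part to $Z_G(x)_0$ gives total dimension $2^{mr}$ for it too.

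\textbf{Main obstacle.} The only delicate point is justifying that $H^*(K/T,k)$ is the regular representation for arbitrary fields with $|W|$ invertible. Over $\Q$ this is standard, and it is all that the stated Ramras--Stafa corollary supplies. For positive characteristic coprime to $|W|$, I would take the following route. $H^*(K/T,\Z)$ is torsion-free, and $W$ acts on it integrally. Reducing a $\Z W$-lattice mod $p$ with $p\nmid|W|$ gives a module determined by its rational character, because ordinary and Brauer characters agree and the group algebra is semisimple. This yields the regular representation over $k$.
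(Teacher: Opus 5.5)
Your proposal is correct and follows the paper's proof essentially step for step. You use Pettet--Souto to pass to $K$, then Baird's desingularisation together with the transfer to identify the cohomology with $W$-invariants. You then use that $H^*(K/T,k)$ is a free $kW$-module of rank one to get $2^{mr}$, and finally the equal-rank property of $Z_G(x)_0$.

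The only difference is that you justify two points the paper takes for granted. The first is the freeness of $H^*(K/T,k)$ when $k$ has positive characteristic coprime to $|W|$. The second is that the order of the Weyl group of $Z_G(x)_0$ divides $|W|$, which is needed to apply the first part to the centraliser.
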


\begin{proof}
By Pettet-Souto, we reduce at once to the case of compact $K$ with maximal torus $T$. The $kW$-module $H^*(K/T,k)$ is free of rank one, so using the theorem of Baird we have: 
\begin{align*}
    \dim_k H^*(C_m(K)_1,k)&=
    \dim_k H^*(K/T\times_W T^m,k)\\&=\dim_k H^*(K/T\times T^m,k)^W\\
    &=\frac{1}{|W|}\dim_k H^*(K/T,k)\cdot \dim_k H^*(T^m,k)\\
    &=2^{mr}.
\end{align*}

For $x$ a semisimple element in $G$, the identity component of its centraliser $Z_G(x)_0$ is a connected reductive group with the same rank as $G$, from which the second claim follows.
\end{proof}

\begin{remark}
Though this dimension only depends on the rank of $G$, the Betti numbers of $C_m(G)_1$ depend on the root system, see \cite[Theorem 1.1]{RamrasStafa}.
\end{remark}

\section{Torsion via Smith theory}
In this section we describe our inductive approach for deducing $p$-torsion in the cohomology of $C_m(G)_1$ from torsion in the corresponding space for centraliser subgroups.

To detect torsion in integral cohomology we use the following lemma.

\begin{lemma} \label{lem:recognition}
If $X$ has the homotopy type of a finite CW-complex, then $X$ has $p$-torsion in its integral cohomology if and only if \[\dim_{\mathbb{Q}}H^*(X,\mathbb{Q})< \dim_{\mathbb{F}_p}H^*(X,\mathbb{F}_p)\]
\end{lemma}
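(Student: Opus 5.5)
Since cohomology is a homotopy invariant, I may replace $X$ by a finite CW-complex, so that $H^n(X,\Z)$ is a finitely generated abelian group for every $n$ and vanishes for $n$ large. For each $n$ I write
\[H^n(X,\Z)\cong \Z^{b_n}\oplus T_n,\]
with $T_n$ finite, and let $t_n$ be the number of cyclic summands of $p$-power order in $T_n$. Equivalently, $t_n=\dim_{\F_p}(T_n\otimes\F_p)=\dim_{\F_p}\operatorname{Tor}(T_n,\F_p)$. The argument is a comparison of the two total dimensions via the universal coefficient theorem.

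First, flatness of $\Q$ over $\Z$ gives $\dim_\Q H^n(X,\Q)=b_n$. For the $\F_p$-coefficients I use the universal coefficient theorem in cohomology form, obtained by applying it to the cellular cochain complex, which is a finite complex of free abelian groups:
\[0\to H^n(X,\Z)\otimes\F_p\to H^n(X,\F_p)\to \operatorname{Tor}(H^{n+1}(X,\Z),\F_p)\to 0.\]
This yields
\[\dim_{\F_p}H^n(X,\F_p)=b_n+t_n+t_{n+1}.\]
Summing over all $n$, which is legitimate because only finitely many terms are nonzero, I get
\[\dim_{\F_p}H^*(X,\F_p)=\dim_\Q H^*(X,\Q)+2\sum_n t_n.\]

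The inequality $\dim_{\F_p}H^*(X,\F_p)\geq \dim_\Q H^*(X,\Q)$ therefore always holds. It is strict exactly when some $t_n>0$, that is, exactly when $H^*(X,\Z)$ has $p$-torsion, which proves the equivalence.

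The only point needing care is to use the cohomological universal coefficient sequence with the correct degree shift. Alternatively, I could use the homological version together with $H^n\cong \operatorname{Hom}(H_n,\Z)\oplus \operatorname{Ext}(H_{n-1},\Z)$. Either way, finiteness of the CW-complex is what ensures that the total dimensions are finite and that the sum over $n$ is legitimate.
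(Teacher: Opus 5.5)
Your proof is correct and takes the same approach as the paper, whose entire proof is the remark that the lemma follows at once from the universal coefficient theorem. Your computation $\dim_{\F_p}H^*(X,\F_p)=\dim_\Q H^*(X,\Q)+2\sum_n t_n$, which uses that $H^0(X,\Z)$ is free so that $t_0=0$, is exactly that remark written out in full.
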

\begin{proof}
This follows at once from the universal coefficient theorem.
\end{proof}

This then yields the restriction on potential $p$-torsion.

\begin{corollary} \label{cor:modular}
If $p$-torsion occurs in the cohomology of $C_m(G)_1$, then $p$ divides the order of $W$, the Weyl group of $G$.
\end{corollary}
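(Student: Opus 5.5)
The plan is to argue by contraposition using Lemma \ref{lem:recognition} together with Corollary \ref{cor:constantrank}. Suppose $p$ does not divide $|W|$. Then both $\F_p$ and $\Q$ are fields whose characteristic is coprime to $|W|$. Corollary \ref{cor:constantrank} therefore applies to each of them and gives
\[
\dim_{\F_p} H^*(C_m(G)_1,\F_p)=2^{mr}=\dim_{\Q} H^*(C_m(G)_1,\Q),
\]
where $r$ is the rank of $G$. Since $C_m(G)_1$ has the homotopy type of a finite CW-complex, as recalled in the previous section, Lemma \ref{lem:recognition} applies. The two total dimensions are equal, so the strict inequality in that lemma fails, and hence there is no $p$-torsion in $H^*(C_m(G)_1,\Z)$.

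A few preliminary reductions are needed. First, we may assume $G$ is connected, since $C_m(G)_1$ only involves the identity component $G_0$, and $G_0$ has the same Weyl group as $G$ in the convention of the paper. Second, using Theorem \ref{thm:pettetsouto}, we pass to a maximal compact subgroup $K$. This preserves both the homotopy type and the Weyl group, which is exactly how Corollary \ref{cor:constantrank} was proved. For characteristic $0$ the coprimality condition is automatic, so the rational count $2^{mr}$ holds unconditionally.

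There is no real obstacle here. The only point to check is that Baird's isomorphism in Proposition \ref{prop:desing} holds with $\F_p$-coefficients when $p\nmid |W|$. This is what is needed for the $W$-invariants computation: there, averaging over $W$ identifies $H^*(K/T\times_W T^m,\F_p)$ with $H^*(K/T\times T^m,\F_p)^W$, and $H^*(K/T,\F_p)$ is the free $\F_pW$-module of rank one. Both facts are included in the hypotheses of Proposition \ref{prop:desing} and Corollary \ref{cor:constantrank}, so the corollary follows directly.
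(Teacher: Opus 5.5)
Your proposal is correct and is essentially the paper's proof: when $p\nmid |W|$, Corollary \ref{cor:constantrank} gives equal total ranks $2^{mr}$ over $\Q$ and over $\F_p$, so Lemma \ref{lem:recognition} rules out $p$-torsion. Your extra checks (reducing to connected $G$, passing to $K$, and using Baird's isomorphism and the transfer with $\F_p$-coefficients) are exactly what is already built into that corollary.
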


\begin{proof}
If $p$ does not divide the order of $W$, Corollary \ref{cor:constantrank} shows that the total ranks of the rational and $\mathbb{F}_p$-cohomology agree.
\end{proof}

So the problem is to show the existence of $p$-torsion for all primes dividing $W$. We will use a theorem of Floyd \cite[Theorem 4.4]{Floyd}; for the cyclic case see also \cite[Theorem 3.10.4]{AlldayPuppe}.
\begin{theorem}\label{thm:floyd}
Let $X$ be a paracompact Hausdorff, finitistic space, $p$ a prime, and assume $\dim_{\F_p}\check{H}^\ast(X,\F_p) < \infty$. Then for $P$ a finite $p$-group acting on $X$, we have:
\[
\dim_{\F_p} \check{H}^\ast(X^P,\F_p)\, \leq \dim_{\F_p} \check{H}^\ast(X,\F_p). 
\]
\end{theorem}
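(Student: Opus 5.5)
The statement is classical, and I would prove it by the standard route: reduce to $P=\Z/p$, then run Borel equivariant cohomology with the localization theorem.

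\emph{Reduction to the cyclic case.} A nontrivial finite $p$-group $P$ has a normal subgroup $H$ of index $p$. We have $X^P=(X^H)^{P/H}$, and $P/H\cong\Z/p$ acts on $X^H$. The fixed set $X^H$ is closed in $X$, so it is again paracompact Hausdorff and finitistic. By induction on $|P|$ we get $\dim_{\F_p}\check H^*(X^H,\F_p)\le \dim_{\F_p}\check H^*(X,\F_p)<\infty$. Applying the cyclic case to the $P/H$-action on $X^H$ then gives the inequality for $P$. So it suffices to treat $G=\Z/p$.

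\emph{Cyclic case.} Let $F=X^G$ and form the Borel construction $X_G=EG\times_G X$, working with sheaf/Čech cohomology throughout. The two key steps are as follows.

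First, consider the Serre (Leray) spectral sequence of $X_G\to BG$. Its $E_2$-page is $H^i(BG;\mathcal H^j(X,\F_p))$ with twisted coefficients. For any finite-dimensional $\F_p[\Z/p]$-module $M$ one has $\dim H^i(\Z/p;M)\le \dim M$. This holds because every indecomposable module is a Jordan block $J_k$, and each such block has one-dimensional cohomology in every degree. Hence for every $n$,
\[\dim H^n_G(X)\le \sum_{j}\dim H^j(X,\F_p)=\dim H^*(X,\F_p).\]

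Second, I need that $H^n_G(X,F)=0$ for $n$ large. The group $G$ acts freely on $X\setminus F$, so relative equivariant cohomology is the cohomology of the quotient pair $(X/G,F)$. Since $X$ is finitistic, $X/G$ is finitistic and paracompact, and so the relevant cohomological dimension is finite. The long exact sequence of the pair then gives $H^n_G(X)\cong H^n_G(F)$ for $n\gg0$. Because $G$ acts trivially on $F$, we have $H^n_G(F)\cong\bigoplus_{i+j=n}H^i(BG)\otimes H^j(F)$. Each $H^i(B\Z/p;\F_p)$ is one-dimensional, so for $n$ larger than the top degree of $H^*(F)$ this has dimension $\dim H^*(F,\F_p)$.

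Along the way I must check that $H^*(F)$ is bounded. This follows because $F$ is finitistic, or by running the argument with a degree cutoff: the comparison shows the sum over $j\le N$ is at most $\dim H^*(X)$ for all $N$. Combining the two steps yields $\dim\check H^*(F,\F_p)\le\dim\check H^*(X,\F_p)$.

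The main obstacle is the vanishing of $H^n_G(X,F)$ in high degrees. This is exactly where the finitistic hypothesis enters: a free action on a finitistic space has quotient of finite cohomological dimension. Proving this requires care with Čech cohomology of non-locally-compact spaces. I would follow Swan/Bredon, using refinements of open covers of finite order to bound the dimension of the quotient, or simply cite Allday--Puppe for this localization statement.
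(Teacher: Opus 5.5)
\textbf{Reduction step.} Your reduction to $\Z/p$ (a normal subgroup $H$ of index $p$, with $X^P=(X^H)^{P/H}$) is exactly the paper's argument. For the cyclic case the paper only cites Allday--Puppe, Theorem 3.10.4, whereas you sketch the Borel--localization proof. Your Step 1 is fine, with one slip: the free block $J_p$ has \emph{zero} cohomology in positive degrees, not one-dimensional cohomology. The bound $\dim H^i(\Z/p;M)\le \dim M$ still holds, so nothing breaks there.

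\textbf{Gap in Step 2.} The mechanism you give for the finitistic hypothesis is false. Finitistic does not imply finite cohomological dimension, since every compact Hausdorff space is finitistic. For example, $X=\Z/p\times\prod_{n\geq 1}S^n$, with $\Z/p$ translating the first factor, is compact and the action is free. Its quotient $\prod_{n\ge1}S^n$ has nonzero Čech cohomology in every degree, so $H^n_G(X)\neq 0$ for all $n$. For the same reason, ``$H^*(F)$ is bounded because $F$ is finitistic'' is false. What finitistic actually buys is weaker. Via an equivariant map of the free part to the nerve of a finite-order cover, you get only that every element of $H^*_G(X,F)$ is annihilated by some power of the polynomial generator $t\in H^*(B\Z/p;\F_p)$. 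It does not give vanishing in high degrees.

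\textbf{How to close it.} You must bring in the hypothesis $\dim_{\F_p}\check H^*(X,\F_p)<\infty$ a second time, algebraically:
\begin{itemize}
\item Your Step 1 spectral sequence then shows $H^*_G(X)$ is finitely generated over the Noetherian ring $H^*(B\Z/p;\F_p)$, so its $t$-torsion is bounded in degree.
\item Localization is exact, so inverting $t$ gives $t^{-1}H^*_G(X)\cong t^{-1}H^*_G(F)\cong H^*(F)\otimes t^{-1}H^*(B\Z/p;\F_p)$.
\item Comparing dimensions in a single large degree gives $\dim H^*(F)\le \dim H^*(X)$.
\end{itemize}
The high-degree vanishing of $H^*_G(X,F)$ you wanted also follows, but only a posteriori. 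Your original Step 2 argument does work when $X$ has finite covering dimension. That covers the paper's application to the semialgebraic spaces $C_m(G)_1$, but not the theorem as stated for finitistic spaces.
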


Here, $\check{H}$ is \v{C}ech cohomology.

\begin{proof}
The cyclic case $P=C_p$ is \cite[Theorem 3.10.4]{AlldayPuppe}. The general case follows by induction over the order of $P$, by choosing a (normal) subgroup $P'\leq P$ with $P/P'\cong C_p$ and using $(X^{P'})^{C_p}= X^P$.
\end{proof}

Our spaces satisfy the conditions of Floyd's result, enabling the following key inductive step.

\begin{proposition} \label{prop:smith}
Let $x$ in $G$ be semisimple of order a power of $p$. Then, if $C_m(Z_G(x)_0)_1$ has $p$-torsion in its cohomology, so does $C_m(G)_1$.
\end{proposition}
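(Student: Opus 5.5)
Let $x\in G$ be semisimple of order $p^k$ and let $P=\langle x\rangle\cong C_{p^k}$ act on $X=C_m(G)_1$ by simultaneous conjugation. This action preserves $X$ because $P$ is connected to the identity inside $G$, and conjugation by an element of the identity component fixes the component of $(1,\dots,1)$. The plan is to chain three facts: Corollary \ref{cor:constantrank}, Lemma \ref{lem:recognition}, and Floyd's theorem (Theorem \ref{thm:floyd}). Suppose $Y:=C_m(Z_G(x)_0)_1$ has $p$-torsion. Then Lemma \ref{lem:recognition} and Corollary \ref{cor:constantrank} (applied with $k=\Q$) give
\[\dim_{\F_p}H^*(Y,\F_p)>\dim_\Q H^*(Y,\Q)=2^{mr}=\dim_\Q H^*(X,\Q).\]
It therefore suffices to prove $\dim_{\F_p}H^*(X,\F_p)\geq \dim_{\F_p}H^*(Y,\F_p)$, after which a second application of Lemma \ref{lem:recognition} to $X$ finishes the proof.

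To compare $X$ and $Y$, first identify the fixed point set. We have
\[X^P=X\cap C_m(Z_G(x))=\{(g_1,\dots,g_m)\in X \mid g_i\in Z_G(x)\}.\]
This is a closed subspace, and $Y$ is a union of connected components of it. Concretely, $C_m(Z_G(x)_0)_1$ is connected, contains $(1,\dots,1)$, and lies in $X^P$. It is a connected component of $X^P$ because $C_m(Z_G(x))$ and $C_m(Z_G(x)_0)$ agree near the identity tuple, both being open-and-closed pieces of a semialgebraic set. More precisely, $Y$ is the identity component of the semialgebraic set $C_m(Z_G(x))$, which is locally connected, and $X^P$ is a union of components of $C_m(Z_G(x))$. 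Hence $Y$ is open and closed in $X^P$, and \v{C}ech cohomology splits: $\check H^*(X^P)\supseteq \check H^*(Y)$ as a direct summand. In particular $\dim \check H^*(Y,\F_p)\le\dim\check H^*(X^P,\F_p)$.

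Next, check the hypotheses of Theorem \ref{thm:floyd} for $X$. By the point-set remarks in Section 2, $X$ is paracompact Hausdorff and finitistic, and its $\F_p$-cohomology is finite dimensional; since $X$ is locally contractible, \v{C}ech and singular cohomology agree. Floyd's theorem then gives $\dim\check H^*(X^P,\F_p)\le \dim H^*(X,\F_p)$. The space $Y$ is also semialgebraic and locally contractible, so its \v{C}ech and singular cohomology agree. Combining the inequalities yields $\dim H^*(Y,\F_p)\le\dim H^*(X,\F_p)$, as required.

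I expect the main obstacle to be the point-set step: showing that $Y$ is a clopen subset of $X^P$, and that \v{C}ech cohomology of $X^P$ is finite and additive over its components. I would handle it by noting that $X^P$ is a real semialgebraic (indeed algebraic) subset of $G^m$, so it has finitely many connected components, each open and locally contractible. Additivity and the identification with singular cohomology then follow directly.
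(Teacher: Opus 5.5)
Your proposal is correct and is essentially the paper's proof. Both arguments use the same chain: Corollary \ref{cor:constantrank} for equal rational ranks, the fact that $C_m(Z_G(x)_0)_1$ is a connected component of the fixed points of conjugation by $x$ on $C_m(G)_1$, Floyd's inequality (Theorem \ref{thm:floyd}), and Lemma \ref{lem:recognition}. You additionally spell out the point-set justification that the paper leaves implicit, namely that $Y$ is clopen in $X^P$ and that \v{C}ech cohomology is additive over components. Invariance of $C_m(G)_1$ under conjugation needs no connectedness argument, since conjugation by any element of $G$ fixes $(1,\dots,1)$.
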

\begin{proof}
These spaces have the same total dimension of rational cohomology by Corollary \ref{cor:constantrank}, and the fixed points of $x$ acting on $C_m(G)_1$ by conjugation have $C_m(Z_G(x)_0)_1$ as a connected component. Coupled with Floyd's inequality, Theorem \ref{thm:floyd}, we obtain:
\begin{equation*}
\begin{split}
\dim_{\mathbb{Q}}H^*(C_m(G)_1,\mathbb{Q}) &= \dim_{\mathbb{Q}}H^*(C_m(Z_G(x)_0)_1,\mathbb{Q}) \\&<
\dim_{\mathbb{F}_p}H^*(C_m(Z_G(x)_0)_1,\mathbb{F}_p)\\&\leq \dim_{\mathbb{F}_p}H^*((C_m(G)_1)^x,\mathbb{F}_p)\\&\leq \dim_{\mathbb{F}_p}H^*(C_m(G)_1,\mathbb{F}_p)\, .
\end{split}
\end{equation*}
Now the result follows from Lemma \ref{lem:recognition}.
\end{proof}

\section{Reduction to $A_{p-1}$} \label{sec:reduction}

In this section we reduce the general case of Theorem \ref{thm:mainreductive} to the case where $G$ is $GL_p(\C),SL_p(\C)$ or $PGL_p(\C)$. Here and below we take $p$ to be a fixed prime dividing $|W|$. In view of Proposition \ref{prop:smith}, if $G$ is not yet one of these three options, we need to find a semisimple element in $G$ of order $p^k$ whose centraliser has strictly smaller dimension, while preserving the fact that $p$ divides the order of the Weyl group.

First we reduce to the case where the derived subgroup $G'$ is simple.

\begin{proposition} \label{prop:simple}
Let $p$ be a prime dividing the order of $W$. Then there is a semisimple $x$ in $G$ of order $p^k$ for some $k\geq 0$ such that $Z_G(x)_0'$ is simple and $p$ divides the order of the Weyl group of $Z_G(x)_0$.
\end{proposition}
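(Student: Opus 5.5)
We work with a maximal torus $T\subset G$ and the root system $\Phi$, so that $W$ is the Weyl group of $\Phi$. The derived group $G'$ is semisimple, and its root system decomposes into irreducible components $\Phi=\Phi_1\sqcup\dots\sqcup\Phi_s$. Correspondingly, $W=W_1\times\dots\times W_s$. Since $p$ divides $|W|$, it divides some $|W_i|$; fix such an $i$.

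The plan is to choose $x\in T$ so that the roots trivial on $x$ are exactly the roots of $\Phi_i$. Recall that for semisimple $x\in T$, the group $Z_G(x)_0$ is the connected reductive group generated by $T$ and the root subgroups $U_\alpha$ with $\alpha(x)=1$. Its root system is therefore $\Phi_x=\{\alpha\in\Phi\mid \alpha(x)=1\}$ and its Weyl group is $W(\Phi_x)$. If $\Phi_x=\Phi_i$, then the derived subgroup $Z_G(x)_0'$ has irreducible root system $\Phi_i$, so it is simple (almost simple in the sense the paper uses). Its Weyl group is $W_i$, which has order divisible by $p$, as required.

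The construction of $x$, which is the main point, goes as follows. We want $x$ of $p$-power order with $\alpha(x)=1$ for all $\alpha\in\Phi_i$ and $\alpha(x)\neq 1$ for every root $\alpha\notin\Phi_i$. Consider the subtorus $S=\big(\bigcap_{\alpha\in\Phi_i}\ker\alpha\big)_0\subset T$.

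First, for each $\alpha\notin\Phi_i$, the restriction $\alpha|_S$ is a nontrivial character. To see this, pass to the Lie algebra. The condition $\alpha|_S=1$ would force $\alpha$ to vanish on the annihilator of $\operatorname{span}(\Phi_i)$ in $\mathfrak t$. That would put $\alpha$ in the $\Q$-span of $\Phi_i$. But the decomposition of $\Phi$ into irreducible components is orthogonal, with linearly independent spans, so $\alpha\in\operatorname{span}(\Phi_i)$ implies $\alpha\in\Phi_i$, a contradiction.

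Second, the $p$-power torsion of $S$ is dense in $S$. Each $\ker(\alpha|_S)$ for $\alpha\notin\Phi_i$ is a proper closed subgroup of $S$, and there are finitely many of them. Since $S$ is connected, their union is a proper closed subset. More concretely, each $S[p^k]\cap\ker(\alpha|_S)$ has index growing to infinity in $S[p^k]\cong(\Z/p^k)^{\dim S}$. Hence for $k$ large there is $x\in S[p^k]$ avoiding all these kernels. By construction $x$ is semisimple of order $p^k$, and $\Phi_x=\Phi_i$.

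Two edge cases remain. If $\Phi$ is already irreducible, take $x=1$ ($k=0$). If $\dim S=0$, then $G'$ is simple of type $\Phi_i$ and $x=1$ again works.

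The main obstacle is the careful justification of the identification of the root system of $Z_G(x)_0$ with $\Phi_x$. This is standard (Steinberg; or, for compact $K$ via Pettet–Souto, the centraliser of a torus element), and we would cite it rather than reprove it.
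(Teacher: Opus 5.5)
Your proof is correct and is essentially the paper's argument. The paper writes $G$ as the image of a central isogeny $Z(G)_0\times\prod G_i\to G$ and takes $x$ trivial in the simple factor $G_j$ with $p\mid |W(G_j)|$, and regular of $p$-power order in every other factor; such an $x$ is exactly an element of your subtorus $S$ avoiding the kernels of the roots outside $\Phi_i$. Your version works directly inside $T$, which spares you from transporting centralisers through the isogeny, and your counting in $S[p^k]$ makes explicit the existence of the required $p$-power order elements, which the paper takes for granted.
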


\begin{proof}
There is an isogeny $f\co Z(G)_0\times \prod_{i=1}^n G_i\to G$ with $G_i$ simple for all $i$, $W=\prod_{i=1}^n W(G_i)$ and $p$ divides the order of $W(G_j)$ for some $j$. Choose regular semisimple elements $y_i$ of order $p^{k_i}$ in $G_i$ for $i\neq j$ and set $y:=(1,y_1,\dots,y_{j-1},1,y_{j+1},\dots,y_n)$. Setting $x:=f(y)$ in $G$, we have that $Z_G(x)_0$ is a finite central quotient of a torus times $G_j$. For this $x$ we then see that $Z_G(x)'_0$ is simple, and $p$ divides the order of the Weyl group of $Z_G(x)_0$.
\end{proof}

In view of Proposition \ref{prop:smith}, we may from now on assume that $G'$ is simple. Next, we describe a suitable family of semisimple elements $x$ in $G'$, using the root system.

Let $H$ be a simple complex algebraic group (below, we will consider $H=G'$). Let $T\subset H$ be a maximal torus and
\[
\Phi=\Phi(H,T)\subset \Hom(T,\C^\times)
\]
the set of roots for $H$. Let $\alpha_1,\dots,\alpha_r$ be a basis of simple roots for $\Phi$, $\omega_1^\vee,\dots,\omega_r^\vee$ the fundamental coweights defined by $(\alpha_j,\omega_i^\vee)=\delta_{ij}$, and $X_\ast(T)=\Hom(\C^\times,T)$ the cocharacter lattice. Since $X_\ast(T)$ has finite index in the coweight lattice, we can fix for each $i=1,\dots,r$ a non-zero integer $d_i$ such that
\[
u_i:=d_i \omega_i^\vee\in X_\ast(T)\,.
\]
Let $\zeta\in \C^\times$ be a primitive $p^N$-th root of unity, for some $N$, and set: \[x_i:=u_i(\zeta)\in T.\]
In particular, $x_i$ is a semisimple $p$-power order element in $H$. The construction of these $x_i$ enables a simple description of their centralisers.

\begin{lemma} \label{lem:killnode}
For large enough $N$, the Dynkin diagram of $Z_H(x_i)_0$ is obtained from the Dynkin diagram of $H$ by deleting the $i$-th node.
\end{lemma}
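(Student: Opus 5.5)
The plan is to use the standard description of the identity component of the centraliser of a semisimple element $t$ in a maximal torus $T$. The group $Z_H(t)_0$ is the connected reductive subgroup containing $T$ whose root system is
\[
\Phi_t=\{\alpha\in\Phi \mid \alpha(t)=1\}.
\]
So the task is to compute $\Phi_{x_i}$ and to show that, for $N$ large, it equals the sub-root system spanned by the simple roots $\alpha_j$ with $j\neq i$.

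For a root $\alpha=\sum_j c_j\alpha_j$, the composite $\alpha\circ u_i$ is the character $z\mapsto z^{(\alpha,u_i)}$ of $\C^\times$, where
\[
(\alpha,u_i)=d_i(\alpha,\omega_i^\vee)=d_i c_i .
\]
Hence $\alpha(x_i)=\zeta^{d_ic_i}$. Since $\zeta$ is a primitive $p^N$-th root of unity, $\alpha(x_i)=1$ holds if and only if $p^N$ divides $d_ic_i$. The coefficients $c_i$ of roots are bounded in absolute value by the coefficient of the highest root, which is at most $6$. We choose $N$ so large that $p^N>6|d_i|$. Then $p^N\mid d_ic_i$ forces $d_ic_i=0$, and since $d_i\neq 0$ this gives $c_i=0$. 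Conversely, $c_i=0$ clearly gives $\alpha(x_i)=1$. This yields
\[
\Phi_{x_i}=\{\alpha\in\Phi\mid c_i=0\},
\]
that is, $\Phi_{x_i}=\Phi\cap\operatorname{span}\{\alpha_j\mid j\neq i\}$.

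The remaining step is to identify this root system. Every root is either a nonnegative or a nonpositive integer combination of simple roots. Therefore $\{\alpha_j\}_{j\neq i}$ is a basis of $\Phi_{x_i}$: its elements lie in $\Phi_{x_i}$, they are linearly independent, and each element of $\Phi_{x_i}$ is a combination of them with coefficients of constant sign. A parabolic subsystem generated by a subset of simple roots has Dynkin diagram the induced subdiagram, because the Cartan integers $\langle\alpha_j,\alpha_k^\vee\rangle$ are unchanged. So the Dynkin diagram of $Z_H(x_i)_0$ is the diagram of $H$ with node $i$ deleted.

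The main point requiring care is the centraliser fact itself: that the Lie algebra of $Z_H(t)$ is $\mathfrak t\oplus\bigoplus_{\alpha(t)=1}\mathfrak g_\alpha$, and that the identity component is generated by $T$ and the corresponding root subgroups. This is standard, for instance in Humphreys or Springer, and I would cite it rather than reprove it. Note that connectedness of $Z_H(x_i)$ is not needed, since only $Z_H(x_i)_0$ appears in the statement.
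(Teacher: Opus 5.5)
Your proposal is correct and follows the paper's proof essentially verbatim. You compute $\alpha(x_i)=\zeta^{d_ic_i}$, choose $N$ so that this equals $1$ exactly when $c_i=0$, and identify $\Phi_{x_i}$ with the subsystem spanned by the remaining simple roots. The only cosmetic difference is that you make the choice of $N$ explicit via the bound $|c_i|\le 6$ on highest-root coefficients (so $p^N>6|d_i|$ suffices), where the paper simply appeals to the finiteness of $\Phi$.
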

\begin{proof}
The root system of $Z_H(x_i)_0$ is given by the subset
\[
\Phi_i=\{\alpha\in \Phi\mid \alpha(x_i)=1\}\,.
\]
For any root $\alpha=\sum_{j=1}^r m_j \alpha_j\in \Phi$ we have
\[
\alpha(x_i)=\zeta^{(\alpha,d_i\omega_i^\vee)}=\zeta^{d_i m_i}\, .
\]
Thus $\alpha(x_i)=1$ if and only if $p^N \mid m_i d_i$. Since $d_i\neq 0$, by choosing $N$ large enough we achieve $\alpha(x_i)=1$ if and only if $m_i=0$, i.e., if and only if $\alpha$ lies in the subroot system of $\Phi$ generated by the simple roots $\{\alpha_1,\dots,\alpha_{i-1},\alpha_{i+1},\dots,\alpha_r\}$. As there are only finitely many roots, we can find a uniform $N$ which works for every root. The Dynkin diagram of $\Phi_i$ is then obtained from that of $\Phi$ by deleting the $i$-th node.
\end{proof}
We invite the reader to compare the Weyl group orders and those of their node-deleted centralisers in Table 1.
\begin{table}[h]
\centering
\renewcommand{\arraystretch}{1.5}
\begin{tabular}{|c|c|c|c|}
\hline
\textbf{Type} & \textbf{Marked Dynkin diagram} & $|W(G')|$ & $|W(Z_{G'}(x)_0)|$\\
\hline
$A_n$ &
\begin{dyn}{72}{16}{-6}{-8}
  \bn{0}{0}\bn{14}{0}\elx{28}\bn{42}{0}\inb{56}{0}
  \drawline(0,0)(14,0)\drawline(14,0)(21,0)
  \drawline(35,0)(42,0)\drawline(42,0)(56,0)
\end{dyn}
& $(n+1)!$ & $n!$\\
\hline
$B_n$ &
\begin{dyn}{84}{16}{-6}{-8}
  \bn{0}{0}\bn{14}{0}\elx{28}\bn{42}{0}\bn{56}{0}\inb{70}{0}
  \drawline(0,0)(14,0)\drawline(14,0)(21,0)\drawline(35,0)(42,0)\drawline(42,0)(56,0)
  \drawline(56,2)(70,2)\drawline(56,-2)(70,-2)
  \drawline(66,3)(70,0)\drawline(66,-3)(70,0)
\end{dyn}
& $2^{n}\,n!$ & $n!$\\
\hline
$C_n$ &
\begin{dyn}{84}{16}{-6}{-8}
  \bn{0}{0}\bn{14}{0}\elx{28}\bn{42}{0}\bn{56}{0}\inb{70}{0}
  \drawline(0,0)(14,0)\drawline(14,0)(21,0)\drawline(35,0)(42,0)\drawline(42,0)(56,0)
  \drawline(56,2)(70,2)\drawline(56,-2)(70,-2)
  \drawline(60,3)(56,0)\drawline(60,-3)(56,0)
\end{dyn}
& $2^{n}\,n!$ & $n!$\\
\hline
$D_n$ &
\begin{dyn}{84}{38}{-6}{-19}
  \bn{0}{0}\bn{14}{0}\elx{28}\bn{42}{0}\bn{56}{0}
  \drawline(0,0)(14,0)\drawline(14,0)(21,0)\drawline(35,0)(42,0)\drawline(42,0)(56,0)
  \inb{70}{12}\bn{70}{-12}
  \drawline(56,0)(70,12)\drawline(56,0)(70,-12)
\end{dyn}
& $2^{n-1}\,n!$ & $n!$\\
\hline
$F_4$ &
\begin{dyn}{52}{16}{-6}{-8}
  \bn{0}{0}\bn{14}{0}\inb{28}{0}\bn{42}{0}
  \drawline(0,0)(14,0)\drawline(28,0)(42,0)
  \drawline(14,2)(28,2)\drawline(14,-2)(28,-2)
  \drawline(24,3)(28,0)\drawline(24,-3)(28,0)
\end{dyn}
& $2^{7}\cdot 3^{2}$ & $2\cdot 3\cdot 2$\\
\hline
$G_2$ &
\begin{dyn}{34}{16}{-4}{-8}
  \bn{18}{0}\inb{0}{0}
  \drawline(0,3)(18,3)\drawline(0,0)(18,0)\drawline(0,-3)(18,-3)
  \drawline(4,3)(0,0)\drawline(4,-3)(0,0)
\end{dyn}
& $3\cdot 2^{2}$ & $2$\\
\hline
$\widetilde{G_2}$ &
\begin{dyn}{52}{16}{-4}{-8}
  \drawline(0,3)(18,3)
  \drawline(0,0)(18,0)
  \drawline(0,-3)(18,-3)
  \drawline(4,3)(0,0)
  \drawline(4,-3)(0,0)

  \drawline(18,0)(34,0)

  \inb{0}{0}             
  \bn{18}{0}             
  \put(36,0){\circle{4}} 
\end{dyn}
& $3\cdot 2^{2}$ & $2\cdot 3$\\
\hline
$E_6$ &
\begin{dyn}{66}{38}{-6}{-19}
  \bn{0}{0}\bn{14}{0}\bn{28}{0}\bn{42}{0}\bn{56}{0}
  \drawline(0,0)(14,0)\drawline(14,0)(28,0)\drawline(28,0)(42,0)\drawline(42,0)(56,0)
  \inb{28}{12}
  \drawline(28,0)(28,12)
\end{dyn}
& $2^{7}\cdot 3^{4}\cdot 5$ & $6!$\\
\hline
$E_7$ &
\begin{dyn}{80}{38}{-6}{-19}
  \bn{0}{0}\bn{14}{0}\bn{28}{0}\bn{42}{0}\bn{56}{0}\bn{70}{0}
  \drawline(0,0)(14,0)\drawline(14,0)(28,0)\drawline(28,0)(42,0)
  \drawline(42,0)(56,0)\drawline(56,0)(70,0)
  \inb{28}{12}
  \drawline(28,0)(28,12)
\end{dyn}
& $2^{10}\cdot 3^{4}\cdot 5\cdot 7$ & $7!$\\
\hline
$E_8$ &
\begin{dyn}{94}{38}{-6}{-19}
  \bn{0}{0}\bn{14}{0}\bn{28}{0}\bn{42}{0}\bn{56}{0}\bn{70}{0}\bn{84}{0}
  \drawline(0,0)(14,0)\drawline(14,0)(28,0)\drawline(28,0)(42,0)\drawline(42,0)(56,0)
  \drawline(56,0)(70,0)\drawline(70,0)(84,0)
  \inb{28}{12}
  \drawline(28,0)(28,12)
\end{dyn}
& $2^{14}\cdot 3^{5}\cdot 5^{2}\cdot 7$ & $8!$\\
\hline
\end{tabular}
\caption{Deleting the circled node from the Dynkin diagram (or extended Dynkin diagram in the $(G_2,p=3)$-case) yields the Dynkin diagram of $Z_{G'}(x)_0$, for the semisimple $p$-power order element $x$ of Lemmas \ref{lem:killnode} and \ref{lem:g2}. The last column shows $|W(Z_{G'}(x)_0)|$, which is still divisible by every relevant prime $p$, aside from type $A_{p-1}$.}
\label{tbl:rootsystems}
\end{table}

Lemma \ref{lem:killnode} will handle almost every case we need, the only exception is type $G_2$ for $p=3$. Before proceeding with the general proof, we treat this case explicitly, noting that in this case the coweight and the cocharacter lattices agree.

\begin{lemma} \label{lem:g2}
Let $H$ be of type $G_2$ and $\zeta$ be a primitive third root of unity. Then the centraliser of $x:=\omega_1^\vee(\zeta)$ has root system of type $A_2$.
\end{lemma}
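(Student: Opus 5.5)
We follow the same strategy as in the proof of Lemma \ref{lem:killnode}. The root system of $Z_H(x)_0$ is
\[
\Phi_x=\{\alpha\in\Phi\mid \alpha(x)=1\}.
\]
For $\alpha=m_1\alpha_1+m_2\alpha_2$ we have $\alpha(x)=\zeta^{(\alpha,\omega_1^\vee)}=\zeta^{m_1}$. Hence $\alpha\in\Phi_x$ if and only if $3\mid m_1$. Here $x$ is a genuine element of $T$, since for $G_2$ the coweight lattice equals the cocharacter lattice, as noted before the statement.

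We fix the labelling to match Table \ref{tbl:rootsystems}: $\alpha_1$ is the short simple root (the circled node) and $\alpha_2$ is the long one. The positive roots of $G_2$ are then
\[
\alpha_1,\ \alpha_2,\ \alpha_1+\alpha_2,\ 2\alpha_1+\alpha_2,\ 3\alpha_1+\alpha_2,\ 3\alpha_1+2\alpha_2 .
\]
The positive roots with $m_1\in\{0,3\}$ are $\alpha_2$, $3\alpha_1+\alpha_2$ and $3\alpha_1+2\alpha_2$. These are precisely the three positive long roots. Together with their negatives they form the six long roots of $G_2$, which constitute a closed subsystem of type $A_2$. The first check is the sum relation $\alpha_2+(3\alpha_1+\alpha_2)=3\alpha_1+2\alpha_2$. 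The second check is that all three roots have the same length and pairwise make angles $\pi/3$ or $2\pi/3$.

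This is also what the extended Dynkin diagram predicts. The highest root of $G_2$ is $3\alpha_1+2\alpha_2$, so the coefficient of the circled node is $3$. Deleting that node leaves the long simple root $\alpha_2$ together with the affine node $-(3\alpha_1+2\alpha_2)$. These two nodes are joined by a single bond and form a diagram of type $A_2$. This matches the $\widetilde{G_2}$ row of Table \ref{tbl:rootsystems}.

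The only point needing care is the labelling convention, that is, whether $\omega_1^\vee$ is dual to the short or to the long simple root. If instead $\alpha_1$ were long, its coefficients in the positive roots are only $0$ and $1$. Then the surviving roots would be just $\pm\alpha_2$, giving type $A_1$ and not $A_2$. So the statement holds exactly with the convention of the table, where the circled node is the short root, and the proof should state this convention explicitly.

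Since $\Phi_x$ determines $Z_H(x)_0$ (a connected reductive group with maximal torus $T$), this completes the proof. The only real obstacle is the bookkeeping of root coefficients and conventions above.
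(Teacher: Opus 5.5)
Your proof is correct and follows the paper's argument. It uses the same criterion $3\mid m_1$ and the same convention that $\alpha_1$ is short, identifies the same surviving roots (the paper names the simple system $\{-\alpha_0,\alpha_2\}$ with $\alpha_0=3\alpha_1+2\alpha_2$, while you list the three positive long roots), and gives the same extended-Dynkin-diagram interpretation.

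There is one harmless slip in your aside on conventions. If $\alpha_1$ were long, its coefficients in the positive roots would be $0,1,2$, since the highest root is then $2\alpha_1+3\alpha_2$, and not only $0$ and $1$. Your conclusion is unaffected: only $\pm\alpha_2$ survive, giving type $A_1$.
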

\begin{proof}
Let $\Phi_x\subset \Phi$ be the roots of $Z_H(x)_0$. A root $\alpha=m_1\alpha_1+m_2\alpha_2\in \Phi$ belongs to $\Phi_x$ if and only if $3\mid m_1$. The highest root is $\alpha_0=3\alpha_1+2\alpha_2$. So $\{-\alpha_0,\alpha_2\}$ is a set of simple roots for $\Phi_x$, and this is a root system of type $A_2$. It is obtained from the extended Dynkin diagram of $G_2$ by deleting the node corresponding to the short simple root $\alpha_1$, see Table \ref{tbl:rootsystems}.
\end{proof}

\begin{proposition} \label{prop:centraliser}
    If the adjoint form $G/Z(G)$ is not $PGL_p(\C)$, then there is a non-central semisimple $x\in G$ of $p$-power order such that $p$ divides the order of $W(Z_G(x)_0)$.
\end{proposition}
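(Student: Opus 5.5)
We work under the standing assumptions of this section: $G'$ is simple, and $p$ divides $|W|$. The plan is to construct $x$ inside a maximal torus $T'$ of $H:=G'$, using Lemma \ref{lem:killnode} or Lemma \ref{lem:g2}, and then check case by case against Table \ref{tbl:rootsystems}.

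First we compare centralisers in $G$ and in $G'$. We have $G=Z(G)_0\cdot G'$, and $Z(G)_0$ centralises everything. Hence for $x\in T'\subset G'$ the roots of $Z_G(x)_0$ with respect to the maximal torus $Z(G)_0T'$ are exactly the roots $\alpha\in\Phi(G',T')$ with $\alpha(x)=1$. So $W(Z_G(x)_0)=W(Z_{G'}(x)_0)$. Moreover $x$ is non-central in $G$ as soon as some root of $G'$ is nontrivial on $x$, that is, as soon as the centraliser has strictly fewer roots than $G'$. The element $x$ is semisimple of $p$-power order by construction.

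Next we do the case analysis on the type of $\Phi(G')$, since $G/Z(G)$ is determined by that type.

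\begin{itemize}
\item \textbf{Type $A_n$.} Here $p\mid (n+1)!$ gives $p\le n+1$. The hypothesis that the adjoint form is not $PGL_p(\C)$ excludes $n=p-1$, so $p\le n$. Apply Lemma \ref{lem:killnode} to an end node. The centraliser has type $A_{n-1}$ and Weyl group $S_n$, whose order $n!$ is divisible by $p$.
\item \textbf{Types $B_n$, $C_n$ ($n\ge2$) and $D_n$ ($n\ge4$).} Delete the circled node of Table \ref{tbl:rootsystems}; the centraliser is $A_{n-1}$ with Weyl group of order $n!$. This is divisible by every odd $p$ dividing $|W|$, since the odd part of $|W|$ divides $n!$. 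It is also divisible by $2$ since $n\ge2$.
\item \textbf{Types $E_6$, $E_7$, $E_8$.} Deleting the marked node gives types $A_5$, $A_6$, $A_7$ with Weyl groups $6!$, $7!$, $8!$. In each case this order is divisible by every prime dividing $|W|$, which one reads off from the table.
\item \textbf{Type $F_4$.} Deleting the marked node gives $A_1\times A_2$ with $|W|=12$, which is divisible by $2$ and $3$.
\item \textbf{Type $G_2$.} For $p=2$, delete either node via Lemma \ref{lem:killnode}; this gives $A_1$ with $|W|=2$. For $p=3$, use Lemma \ref{lem:g2}; this gives $A_2$ with $|W|=6$.
\end{itemize}

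In all cases the resulting root subsystem is proper, so $x$ is non-central.

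The main obstacle is making sure that Lemma \ref{lem:killnode} and Lemma \ref{lem:g2} produce elements of $G'$ (and hence of $G$) for an arbitrary isogeny type, rather than only for the adjoint group. In Lemma \ref{lem:killnode} this is handled by the choice of $d_i$ with $u_i\in X_*(T')$. For Lemma \ref{lem:g2} it holds because $G_2$ has trivial centre, so its coweight lattice equals its cocharacter lattice. A secondary point needing care is the claim that the root datum of $Z_G(x)_0$ is read off from $G'$, which is the centraliser comparison in the first step.
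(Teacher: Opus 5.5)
Your proof is correct and follows the paper's argument. You choose $x$ in a maximal torus of $G'$ via Lemma \ref{lem:killnode} at the marked nodes of Table \ref{tbl:rootsystems}, using Lemma \ref{lem:g2} for $G_2$ at $p=3$, and then transfer to $G$ using that $Z(G)_0$ is central; this is the paper's use of the isogeny $f\co Z(G)_0\times G'\to G$, restricted to $G'$. Your explicit divisibility checks (for example $p\le n$ in type $A_n$, and the odd part of $|W|$ dividing $n!$ in types $B$, $C$, $D$) spell out what the paper leaves to the table.
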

\begin{proof}
We have an isogeny $f\co Z(G)_0\times G'\to G$, and we know that $G'$ is simple and not of type $A_{p-1}$. Applying Lemma \ref{lem:killnode} to $H=G'$, aside from $G_2$ at $p=3$, we can find a non-central semisimple $y\in G'$ of $p$-power order such that $p$ divides the order of $W(Z_{G'}(y)_0)$ by the coweight corresponding to the marked nodes given in Table \ref{tbl:rootsystems}. Lemma \ref{lem:g2} supplies such a $y$ in the case of $G_2$ at the prime $p=3$. In every case, set $x:=f(y)\in G$. Then $x$ is semisimple and of $p$-power order, and because $f$ is an isogeny, the root system of $Z_G(x)_0$ agrees with that of $Z_{G'}(y)_0$. Therefore, $x$ is non-central and $p$ divides the order of $W(Z_G(x)_0)$.
\end{proof}

Because $x$ is non-central, $Z_G(x)_0$ is a proper subgroup of $G$, and since $G$ is connected, it has strictly smaller dimension. In view of this, by applying Propositions \ref{prop:smith}, \ref{prop:simple} and \ref{prop:centraliser}, we reduce to the case of adjoint type $PGL_p(\C)$.
\begin{proposition} \label{prop:basecase}
Let $G$ be a connected complex reductive group. If the adjoint form of $G'$ is $PGL_p(\C)$, then $\C^\times \times G$ is isomorphic to the product of a torus $Z$ with $SL_p(\C)$, $PGL_p(\C)$, or $GL_p(\C)$.
\end{proposition}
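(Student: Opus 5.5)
We will use the structure theory of connected reductive groups. Let $S:=Z(G)_0$ be the central torus and $G'$ the derived subgroup, which by assumption is simple with adjoint form $PGL_p(\C)$. Then $G'$ is a quotient of $SL_p(\C)$ by a subgroup of $\mu_p$, and since $p$ is prime, $G'$ is either $SL_p(\C)$ or $PGL_p(\C)$. The multiplication map $S\times G'\to G$ is an isogeny with kernel $S\cap G'$ embedded antidiagonally, and $S\cap G'\subseteq Z(G')$. We split into two cases according to this finite kernel.

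\emph{Case 1: $S\cap G'$ is trivial.} Then $G\cong S\times G'$, and $\C^\times\times G\cong(\C^\times\times S)\times G'$. This is a torus times $SL_p(\C)$ or $PGL_p(\C)$.

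\emph{Case 2: $S\cap G'$ is nontrivial.} Then $Z(G')\neq 1$, which forces $G'=SL_p(\C)$ and $S\cap G'=\mu_p$, the full centre. Thus $G\cong(S\times SL_p(\C))/\mu_p$, where $\mu_p$ embeds in $S$ via some injective homomorphism $\iota\colon\mu_p\to S$. We will split the torus $S$ so as to isolate the $\mu_p$. Choose a cocharacter $\lambda\in X_*(S)$ with $\lambda(\zeta_p)=\iota(\zeta_p)$. Such a $\lambda$ exists because the $p$-torsion of $S$ is $X_*(S)\otimes\mu_p$, i.e.\ $X_*(S)/p$. Since $\iota$ is injective, $\lambda$ is not divisible by $p$ in $X_*(S)$.

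This is the main obstacle: $\lambda$ may still fail to be primitive, and we then cannot directly split off $\lambda(\C^\times)$ as a direct factor of $S$. To get around this, we first replace $\lambda$ by $\lambda+p\mu$, which has the same value on $\mu_p$. Since $\lambda\not\equiv 0 \pmod p$, it can be extended to a basis of $X_*(S)/p$, and lifting that basis to $X_*(S)$ shows some $\lambda+p\mu$ is primitive. If extending is awkward, we instead use the extra $\C^\times$ factor supplied in the statement: work in $X_*(\C^\times\times S)=\Z\oplus X_*(S)$ and take $\lambda'=(p,\lambda)$ or a similar adjustment, which yields a primitive cocharacter with the same restriction to $\mu_p$. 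This is presumably why the statement multiplies by $\C^\times$.

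With a primitive $\lambda$ in hand, write $\C^\times\times S\cong \C^\times\times Z$, with the first factor the image of $\lambda$ and $\mu_p$ embedded as the $p$-th roots of unity in it. Then
\[
\C^\times\times G\cong Z\times(\C^\times\times SL_p(\C))/\mu_p\cong Z\times GL_p(\C),
\]
where $\mu_p$ is embedded antidiagonally in $\C^\times\times SL_p(\C)$. The last isomorphism comes from the standard map $(t,A)\mapsto tA$: it is surjective onto $GL_p(\C)$ since $p$-th roots exist in $\C$, and its kernel is $\{(\zeta^{-1},\zeta I)\}\cong\mu_p$. Finally, we check that the identifications respect the orientation of $\iota$, which can be fixed by composing with $\zeta\mapsto\zeta^{-1}$ or any automorphism of $\mu_p$; this is harmless because $\zeta I\mapsto\zeta^{a}I$ is realised by changing $\lambda$ to $a\lambda$, and $a\lambda$ is still primitive modulo our adjustment.
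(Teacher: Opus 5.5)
Your proof is correct once the $(p,\lambda)$ construction is made the actual argument rather than a fallback. It has the same overall structure as the paper's: the same case split, the same use of the extra $\C^\times$ to change coordinates on the torus so that $\mu_p$ lies in a single $\C^\times$ factor, and the same identification with $GL_p(\C)$. The difference is the lattice step.

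The paper pads to $k=\dim Z(G)_0\geq 2$. It then uses transitivity of $SL_k(\Z/p)$ on nonzero vectors together with surjectivity of $SL_k(\Z)\to SL_k(\Z/p)$ to move $a$ to $(0,\dots,0,-1)$. You instead write down a primitive integral lift directly. The vector $\lambda'=(p,\lambda)\in\Z\oplus X_*(S)$ restricts to $\iota$ on $\mu_p$ since $\zeta^p=1$. It is primitive because $\gcd(p,\lambda_1,\dots,\lambda_k)$ divides $p$ while $p\nmid\lambda$. After that you only need that a primitive vector extends to a $\Z$-basis. This is more elementary than the paper's route and shows exactly what the extra $\C^\times$ is for.

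For the orientation, only a sign is needed. Replace $\lambda'$ by $-\lambda'$, which is still primitive, so the subgroup becomes $\{(\zeta^{-1},\zeta I)\}$, the kernel of $(t,A)\mapsto tA$. Your remark about arbitrary automorphisms of $\mu_p$ and $a\lambda$ is unnecessary, and $a\lambda$ is not primitive for $|a|>1$.

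You should drop your first option, because it is false when $\dim S=1$. In that case primitive means $\pm1$, and for $p=5$, $\lambda=2$, no $2+5\mu$ equals $\pm1$. In general, lifting a basis of $X_*(S)/p$ need not give a basis of $X_*(S)$. This is not an artefact of the method: $(\C^\times\times SL_5(\C))/\langle(\zeta^2,\zeta I)\rangle$ is not isomorphic to $GL_5(\C)$. An isomorphism would induce an automorphism of $\C^\times\times SL_5(\C)$ that preserves both factors and carries one kernel to the other. Such automorphisms act on the two copies of $\mu_5$ only by $\pm1$, and $2\not\equiv\pm1 \pmod 5$.

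For $\dim S\geq 2$ your claim is true, but proving it requires extending $\bar\lambda$ to a basis of determinant $1$ and lifting through $SL_k(\Z)\to SL_k(\Z/p)$, which is precisely the paper's ingredient. So the $\C^\times$ factor in the statement is genuinely necessary, not merely a convenience.
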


\begin{proof}
Consider the multiplication map $f\co Z(G)_0\times G'\to G$. The assumption on the adjoint form says that $G'$ is either $SL_p(\C)$ or $PGL_p(\C)$. If $f$ is an isomorphism, then we are in one of the first two cases, so we are done.

If the kernel of $f$ is nontrivial, it is the subgroup
\[
L:=\{(z^{-1},z)\mid z\in Z(G)_0\cap G'\}\subset Z(G)_0\times G'\,.
\]
Since $Z(G)_0\cap G'\subset Z(G')$, the nontriviality of $L$ implies $G'$ is isomorphic to $SL_p(\C)$ and $Z(G)_0\cap G'=Z(G')$. Fix such an isomorphism $G'\cong SL_p(\C)$, as well as a primitive $p$-th root of unity $\zeta\in \C^\times$, and an isomorphism $Z(G)_0\cong (\C^\times)^k$ where $k=\dim(Z(G)_0)$. Under these isomorphisms, $L$ identifies with the cyclic subgroup $F\subset (\C^\times)^k\times SL_p(\C)$ generated by $(\zeta^{a_1},\dots,\zeta^{a_k},\zeta I)$ for a unique non-zero vector $a=(a_1,\dots,a_k)\in (\Z/p)^k$.

Replacing $G$ by $\C^\times\times  G$, we may assume that $k\geq 2$. Then $SL_k(\Z/p)$ acts transitively on the non-zero vectors in $(\Z/p)^k$, and since the reduction $SL_k(\Z) \to SL_k(\Z/p)$ is surjective \cite[Lemma 1.38]{Sh71}, there exists an $M\in SL_k(\Z)$ with $Ma=(0,\dots,0,-1)$ modulo $p$. The matrix $M$ defines an automorphism of $(\C^\times)^k$ and, extending it by the identity on $SL_p(\C)$, an automorphism of $(\C^\times)^k\times SL_p(\C)$. This automorphism then maps $F$ onto the subgroup $F'$ generated by $(1,\dots,1,\zeta^{-1},\zeta I)$. It follows that there is an isomorphism
\[
\C^\times \times G \cong ((\C^\times)^k\times SL_p(\C))/F' \cong (\C^\times)^{k-1} \times GL_p(\C)\,,
\]
which proves the assertion.
\end{proof}

\section{The base case} \label{sec:basecase}

In this section we prove Theorem \ref{thm:mainreductive} by verifying the base case of type $A_{p-1}$ through explicit calculation. Following Proposition \ref{prop:basecase}, since we have an isomorphism \[C_m(Z\times G)_1\cong Z^m\times C_m(G)_1\,,\] and the cohomology of $\mathbb{C}^\times\simeq S^1$ is torsion free, we reduce to showing $p$-torsion in the cohomology of $C_m(G)_1$ for \[G=GL_p(\C),SL_p(\C),PGL_p(\C).\] Since $C_2(G)_1$ is a retract of $C_m(G)_1$ for higher $m$, we may further assume that $m=2$. We will verify these cases working with the associated compact Lie groups, using Theorem \ref{thm:pettetsouto}, so we need to find $p$-torsion in the space $C_2(K)_1$ for \[K=U(p),\,SU(p),\,PU(p).\] First, let us reduce to the case of $U(p)$.

\begin{proposition}\label{prop:U(p) reduction}
If $C_2(U(p))_1$ has $p$-torsion in its integral cohomology, then so do the spaces $C_2(SU(p))_1$ and $C_2(PU(p))_1$.
\end{proposition}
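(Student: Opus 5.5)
We relate $C_2(U(p))_1$ to the other two spaces through the center $S^1=Z(U(p))$ and the isogenies $SU(p)\times S^1\to U(p)$ and $U(p)\to PU(p)$. The key observation is that commuting pairs can be scaled by central elements. The multiplication map
\[
\mu\co S^1\times S^1\times C_2(SU(p))_1\to C_2(U(p))_1,\qquad (z,w,(a,b))\mapsto (za,wb),
\]
is surjective onto the identity component, since every commuting pair in $U(p)$ lies in a maximal torus. It is also the quotient by the free action of $\Z/p\times\Z/p$, where $(\zeta^i,\zeta^j)$ acts by $(z,w,a,b)\mapsto(z\zeta^{-i},w\zeta^{-j},\zeta^i a,\zeta^j b)$, with $\zeta$ a primitive $p$-th root of unity. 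This gives a $p$-group covering $C_2(SU(p))_1\times T^2\to C_2(U(p))_1$.

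Similarly, the projection $U(p)\to PU(p)$ induces a map $C_2(U(p))_1\to C_2(PU(p))_1$. A pair in the identity component of $C_2(PU(p))$ lifts to a commuting pair in a maximal torus of $U(p)$, so this map is a principal $S^1\times S^1$-bundle. Hence
\[
C_2(U(p))_1\cong (S^1\times S^1)\times_{\text{?}} \ldots
\]
Rather than untwist this bundle, I would argue directly with the homotopy type as follows.

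\textbf{Case $SU(p)$.} Suppose $C_2(SU(p))_1$ has no $p$-torsion. Then $Y=C_2(SU(p))_1\times T^2$ has no $p$-torsion either, by the Künneth formula. Now $C_2(U(p))_1=Y/P$ with $P=(\Z/p)^2$ acting freely. I would rather use the transfer in the other direction: compose $\mu$ with the map
\[
\nu\co C_2(U(p))_1\to C_2(SU(p))_1\times T^2 .
\]
No such map exists globally, so instead I would use the splitting $U(p)\cong SU(p)\rtimes S^1$ given as spaces by $g\mapsto (g\,\mathrm{diag}(\det g^{-1},1,\dots),\det g)$. Unfortunately this does not preserve commutativity.

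The cleaner route is Proposition \ref{prop:smith}-style counting. By Corollary \ref{cor:constantrank} and Lemma \ref{lem:recognition}, it suffices to compare total $\F_p$-dimensions. Since the rational total dimensions are $2^{2(p-1)}\cdot 4=2^{2p}$ on both sides, it is enough to show
\[
\dim_{\F_p}H^*(C_2(U(p))_1,\F_p)\le \dim_{\F_p}H^*(C_2(SU(p))_1\times T^2,\F_p).
\]
This is exactly Floyd-type inequality in the form for orbit spaces of free $p$-group actions: for a free action of a finite $p$-group $P$ on a finite-dimensional $Y$, one has $\dim H^*(Y/P,\F_p)\le\dim H^*(Y,\F_p)$? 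That is false in general (e.g.\ $S^1\to S^1/\Z/p$ is fine, but $B$-type examples fail).

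So instead I would avoid the quotient entirely. Write the covering as an equivalence $C_2(U(p))_1\simeq Y\times_P EP$ and note that $P$ acts on $C_2(SU(p))_1$ by central translation, which is homotopic to the identity through the connected center action. It is not, however, since $Z(SU(p))$ is discrete. Thus I expect this step to be the main obstacle. My first attempt would be to show the free $P$-action on $T^2$ factor makes $Y\times_P EP\simeq C_2(SU(p))_1\times_{P}T^2$ a bundle over $T^2/P\cong T^2$ with fibre $C_2(SU(p))_1$, then run the Serre spectral sequence mod $p$ to bound the total dimension of $C_2(U(p))_1$ by $4\dim H^*(C_2(SU(p))_1,\F_p)$. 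This yields the inequality above and hence the $SU(p)$ case.

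\textbf{Case $PU(p)$.} Symmetrically, I would view $C_2(U(p))_1\to C_2(PU(p))_1$ as a principal $T^2$-bundle and apply the Serre spectral sequence mod $p$. This gives
\[
\dim_{\F_p}H^*(C_2(U(p))_1)\le 4\dim_{\F_p}H^*(C_2(PU(p))_1).
\]
Combining this with the equality of rational total dimensions, $2^{2p}=4\cdot 2^{2(p-1)}$ from Corollary \ref{cor:constantrank}, and Lemma \ref{lem:recognition}, $p$-torsion in $C_2(U(p))_1$ forces $p$-torsion in $C_2(PU(p))_1$.
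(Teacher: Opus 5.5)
Once you delete the false starts, your argument is correct: the abandoned transfer and ``splitting'' ideas, the orbit-space inequality for free $p$-group actions (which is indeed false, e.g.\ for $S^3\to L(p,1)$), and the remark about homotopy to the identity. What remains uses the same two fibrations as the paper. Your identification $C_2(U(p))_1\cong C_2(SU(p))_1\times_{(\Z/p)^2}T^2$ is exactly the determinant fibration over $T^2/(\Z/p)^2\cong S^1\times S^1$ with fibre $C_2(SU(p))_1$. The central quotient is the principal $U(1)^2$-bundle over $C_2(PU(p))_1$.

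Two points each need one sentence. In the $SU(p)$ case, the local system $\mathcal{H}^*(C_2(SU(p))_1;\F_p)$ on $T^2$ comes from central translations and need not be trivial, as you yourself suspected. The factor $4$ still holds, because the cellular cochain complex of $T^2$ with coefficients in a local system with fibre $V$ is $V\to V^2\to V$, so $\dim H^*(T^2;\mathcal{L})\le 4\dim V$. In the $PU(p)$ case, the local system is trivial because the structure group $U(1)^2$ is connected, so $E_2\cong H^*(C_2(PU(p))_1;\F_p)\otimes H^*(T^2;\F_p)$.

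Where you differ from the paper is the closing mechanism. The paper assumes base and fibre are $p$-torsion free. It argues that the Serre spectral sequences collapse rationally with trivial monodromy, using Baird's desingularisation and the rank count. It then lifts this collapse to $\Z_{(p)}$-coefficients and concludes that the total space is $p$-torsion free.

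You use only the crude inequality $\dim_{\F_p} E_\infty\le\dim_{\F_p} E_2$, the total rational ranks $2^{2p}=4\cdot 2^{2(p-1)}$ from Corollary \ref{cor:constantrank}, and Lemma \ref{lem:recognition}. Your route is more elementary. It needs neither rational collapse nor an identification of the monodromy of the determinant fibration, and your explicit finite-quotient description makes the bundle structure of the determinant map transparent. The paper's route gives slightly more, namely collapse of the $\Z_{(p)}$-spectral sequences when base and fibre are torsion free, but the proposition does not need that.
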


\begin{proof}
Working with $\Z_{(p)}$-coefficients for cohomology, we will show that if either $C_2(SU(p))_1$ or $ C_2(PU(p))_1$ has no $p$-torsion, then $C_2(U(p))_1$ also has no $p$-torsion. Consider the following fibrations induced by central quotient and the determinant respectively:
\begin{alignat*}{2}
U(1)^2& \to C_2(U(p))_1\to C_2(PU(p))_1\,, \\
C_2(SU(p))_1& \to C_2(U(p))_1\to S^1\times S^1\, .
\end{alignat*}
The quotient of $C_2(U(p))_1$ by the (free) central $U(1)^2$-action is $C_2(PU(p))_1$, because any commuting pair in the identity component $C_2(PU(p))_1$ is conjugate to one in the maximal torus of $PU(p)$. Similarly, the fibre of this second fibration is connected, as any commuting pair in the simply connected group $SU(p)$ lies in some maximal torus.

The rational equivalence of Proposition \ref{prop:desing} and the rank calculation of Corollary \ref{cor:constantrank} show that over the rationals, the Serre spectral sequences for these fibrations collapse at the $E_2$-page, and we have trivial monodromy. If both the base and the fibre have no $p$-torsion in their $\mathbb{Z}_{(p)}$-cohomology, and the Serre spectral sequence collapses over $\Q$ with trivial monodromy, we may deduce collapse of the Serre spectral sequence over $\Z_{(p)}$. This implies there is no $p$-torsion in the cohomology of the total space, completing the proof.
\end{proof}

It thus remains to exhibit torsion in the cohomology of $C_2(U(p))_1$. Our proof is based on the following spectral sequence degeneration criterion.

\begin{proposition} \label{prop:ss}
Let $F\to E\to B$ be a fibre bundle over a connected base, whose fibre $F$ is a finite CW-complex of dimension $f$. Assume there exist nested closed sub-bundles \[E_{\leq 0}\subset E_{\leq 1}\subset \cdots \subset E_{\leq f}=E\] such that the fibres $F_{\leq k}$ of $E_{\leq k}$ are finite CW-complexes of dimension $\leq k$. If for some coefficients $R$, and all $0\leq k\leq f$ the induced maps\[H^i(F,R)\to H^i(F_{\leq k},R)\] are isomorphisms in all degrees $i \leq k$, then the Serre spectral sequence degenerates at $E_2$ and the associated filtration splits. In particular, we have an isomorphism\[H^n(E,R)\cong \bigoplus_{p+q=n} H^p(B,\mathcal{H}^q(F,R))\]
for every $n\geq 0$.
\end{proposition}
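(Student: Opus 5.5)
The strategy is to prove that every class in $E_2^{0,q}=H^0(B,\mathcal{H}^q(F,R))$ survives to $E_\infty$. Then we apply the same argument to all of $E_2$ by a comparison with the filtered sub-bundles. Concretely, I would compare the Serre spectral sequence of $E\to B$ with that of $E_{\leq k}\to B$ for each $k$, via the restriction maps induced by the inclusions $E_{\leq k}\subset E$.

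\emph{Step 1 (identification of $E_2$).} Since $H^i(F,R)\to H^i(F_{\leq k},R)$ is an isomorphism for $i\leq k$, and these maps are natural in the bundle structure, the local systems $\mathcal{H}^i(F,R)$ and $\mathcal{H}^i(F_{\leq k},R)$ are isomorphic for $i\leq k$. Moreover $\mathcal{H}^i(F_{\leq k},R)=0$ for $i>k$, since $\dim F_{\leq k}\leq k$. Hence the map of spectral sequences $E_2(E)\to E_2(E_{\leq k})$ is an isomorphism on the rows $q\leq k$, and the target vanishes for $q>k$.

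\emph{Step 2 (vanishing of differentials).} Consider a differential $d_r\colon E_r^{p,q}\to E_r^{p+r,q-r+1}$ in the spectral sequence for $E$, with $r\geq 2$. I would map to the spectral sequence for $E_{\leq q}$. There the row $q$ is the top row, and all rows $\leq q$ are isomorphic to those for $E$ at $E_2$. I would then argue by induction on $r$ that the map $E_r(E)\to E_r(E_{\leq q})$ is an isomorphism on the rows $\leq q$. This holds because the differentials entering rows $\leq q$ in $E$ originate in rows $\geq q$, and the point is to show these incoming differentials vanish. The cleaner route is to apply the inductive hypothesis over all $k$ simultaneously: by induction on $r$, all $d_{r'}$ with $r'<r$ vanish in every $E_{\leq k}$ and in $E$, so that $E_r=E_2$ for all of these and the comparison maps are still isomorphisms on the rows $\leq k$. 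A differential $d_r$ in $E$ leaving row $q$ lands in row $q-r+1<q$. By naturality it is carried to the corresponding differential in $E_{\leq q}$, and the comparison map is injective on the target row. So it suffices to show the differential vanishes in $E_{\leq q}$.

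For that I would use the surjectivity direction. Row $q$ in $E_{\leq q}$ is the image of row $q$ of $E$, and at the same time, for $E_{\leq q-1}$, the rows below $q$ agree. This is where the main obstacle lies: showing that a differential out of the top row of $E_{\leq q}$ vanishes. I would try the long exact sequence of the pair $(E_{\leq q},E_{\leq q-1})$ fibrewise. The hypothesis gives that $H^i(F_{\leq q},F_{\leq q-1})$ is concentrated in degree $q$ and that $H^{q}(F_{\leq q})\to H^q(F_{\leq q},\ldots)$ behaves well. If that fails, I would fall back on an argument by edge maps: the composite $H^*(E)\to H^*(E_{\leq q})$ factors compatibly, so the classes in the top row of $E_{\leq q}$ lift to classes of $E$, where they sit in higher filtration. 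This forces them to be permanent cycles.

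\emph{Step 3 (splitting).} Finally I would get the splitting of the filtration as follows. The surviving top-row part $H^p(B,\mathcal{H}^k(F))$ of $E_{\leq k}$ includes as the graded piece, and the restriction maps $H^n(E)\to H^n(E_{\leq k})$ give compatible projections onto the quotient of $H^n(E)$ by the part with fibre degree $>k$. Assembling these over $k$ yields the direct sum decomposition $H^n(E,R)\cong\bigoplus_{p+q=n}H^p(B,\mathcal{H}^q(F,R))$.
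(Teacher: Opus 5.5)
There is a genuine gap in Step 2, which carries the entire degeneration claim. Your reduction is sound: with the simultaneous induction on $r$, all the spectral sequences have $E_r=E_2$, and the comparison maps $E_r(E)\to E_r(E_{\leq k})$ are isomorphisms on rows $\leq k$. Injectivity on the target row then reduces $d_r$ out of row $q$ of $E$ to the corresponding differential out of the top row of $E_{\leq q}$. But you then call the vanishing of that top-row differential ``the main obstacle'' and never resolve it. The relative long exact sequence route is only gestured at. The edge-map fallback is circular. To lift top-row classes of $E_{\leq q}$ to $H^*(E)$, they would first have to be permanent cycles, so that they define classes in $H^*(E_{\leq q})$ at all. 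You would also need $H^*(E)\to H^*(E_{\leq q})$ to be surjective, which is essentially what is being proved.

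The missing idea is to compare with $E_{\leq q-1}$ instead of $E_{\leq q}$. The target row $q-r+1\leq q-1$ is still mapped isomorphically by $\phi\colon E_r(E)\to E_r(E_{\leq q-1})$, by your induction on $r$. The source $E_r^{p,q}(E_{\leq q-1})=H^p(B,\mathcal{H}^q(F_{\leq q-1},R))$ vanishes because $\dim F_{\leq q-1}\leq q-1$. Hence $\phi(d_r x)=d_r\phi(x)=0$, and injectivity on the target row gives $d_r x=0$. The same argument works for every $E_{\leq k}$ at once, so your induction closes immediately. The paper packages this as an induction on $k$: assuming $E_{\leq k-1}$ collapses, every differential of $E_{\leq k}$ lands in a row $\leq k-1$, where $E_{\leq k}\to E_{\leq k-1}$ is injective and the target differentials vanish.

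Your Step 3 has the right idea but is imprecise. The ``part of fibre degree $>k$'' is not a step of the Serre filtration $G^\bullet$; it is the kernel $K$ of the restriction $H^n(E)\to H^n(E_{\leq k})$. You must show $K$ is a complement to $G^{n-k}H^n(E)$. After collapse, $G^{n-k}H^n(E)\to G^{n-k}H^n(E_{\leq k})=H^n(E_{\leq k})$ is an isomorphism, which gives both $K\cap G^{n-k}H^n(E)=0$ and $K+G^{n-k}H^n(E)=H^n(E)$. With $k=n-p-1$, $G^p\cap K$ is a complement to $G^{p+1}$ in $G^p$; this $K$ is precisely the kernel of the paper's retraction $G^pH^n(E)\to G^pH^n(E_{\leq n-p-1})\cong G^{p+1}H^n(E_{\leq n-p-1})\cong G^{p+1}H^n(E)$.
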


\begin{proof}
We use $R$-coefficients throughout the proof. First we prove collapse of the spectral sequence for $E_{\leq k}$ for every $k$. We induct over the finite sequence of sub-bundles $E_{\leq k}\subset E$, noting that the base case $E_{\leq 0}$ follows at once as the $E_2$-page lies on a single line.

Assume the statement holds for $E_{\leq k-1}$. The inclusion of bundles \[E_{\leq k-1}\to E_{\leq k}\] induces a morphism of Serre spectral sequences \[\phi:E^{p,q}_{\bullet}\to \widehat{E}^{p,q}_{\bullet}.\] By induction, we know that all the differentials in the target spectral sequence are zero. Our assumption on the fibre cohomology implies that, for $q\leq k-1$, the induced map of $R$ local systems on $B$ \[\mathcal{H}^q(F_{\leq k})\to \mathcal{H}^q(F_{\leq k-1})\] is an isomorphism, and $\mathcal{H}^q(F_{\leq k})=0$ for $q> k$. Consider the first page $E_l$, $l\geq 2$, with a potentially nonzero differential, which is therefore of the form \[d:E_l^{p,k}\to E_l^{p+l,k+1-l}.\]
By naturality of the Serre spectral sequence, we have a commutative diagram induced by $\phi$:
\begin{equation}\label{Diagram:nat of SS}
    \begin{tikzcd}
E^{p,k}_{l}\arrow[d,"d"]\arrow[rr,"\phi_l^{p,k}"]&& \widehat{E}^{p,k}_{l}\arrow[d,"0"]\\
E^{p+l,k+1-l}_{l}\arrow[rr,"\phi_l^{p+l,k+1-l}"]&& \widehat{E}^{p+l,k+1-l}_{l}.
\end{tikzcd}
\end{equation}
By the minimality of $l\geq 2$, we see that $\phi_l^{p+l,k+1-l}$ equals $\phi_2^{p+l,k+1-l}$, which we may identify with the canonical map
 \[H^{p+l}(B,\mathcal{H}^{k+1-l}(F_{\leq k}))\to H^{p+l}(B,\mathcal{H}^{k+1-l}(F_{\leq k-1}))\, .\]
This map is an isomorphism, since $k+1-l\leq k-1$. By commutativity of (\ref{Diagram:nat of SS}), we deduce that $d$ must be zero, and so the spectral sequence collapses at $E_2$.

Now we prove the second statement of the proposition, namely that the associated filtration splits. For $0\leq k \leq f$, let
\[
0\subset G^nH^n(E_{\leq k})\subset \cdots \subset G^{1}H^n(E_{\leq k})\subset G^{0}H^n(E_{\leq k})=H^n(E_{\leq k})
\]
denote the finite, descending filtration of $H^n(E_{\leq k})$ to which the Serre spectral sequence converges. For every $p\geq 0$ we have
 \[G^pH^n(E_{\leq k})/G^{p+1}H^n(E_{\leq k})\cong E^{p,n-p}_\infty=E_2^{p,n-p}\cong H^p(B,\mathcal{H}^{n-p}(F_{\leq k})).\]
By considering the associated graded, we see that the map \[G^pH^n(E_{\leq k})\to G^pH^n(E_{\leq k-1})\] is an isomorphism if $p\geq n-k+1$.

Now we show that for every $0\leq p\leq n-1$ the inclusion
\[
G^{p+1}H^n(E)\to G^{p}H^n(E)
\]
is split. By naturality of the Serre spectral sequence, the bundle maps $E_{\leq n-p-1}\to E_{\leq n-p}\to E$ (set $E_{\leq k}:=E$ if $k\geq f$) induce a commutative diagram:
 \[\begin{tikzcd}
    G^{p+1}H^n(E)\arrow[d,"\simeq"]\arrow[r]&G^{p}H^n(E)\arrow[d,"\simeq"]\\
     G^{p+1}H^n(E_{\leq n-p})\arrow[r]\arrow[d,"\simeq"]& G^{p}H^n(E_{\leq n-p})\arrow[d]\\
     G^{p+1}H^n(E_{\leq n-p-1})\arrow[r,"\simeq"]& G^{p}H^n(E_{\leq n-p-1})\, .&
 \end{tikzcd}\]
The lower horizontal map is an isomorphism, because the cohomology of the fibre $F_{\leq n-p-1}$ vanishes in degrees $\geq n-p$. The composition provides the desired splitting.
\end{proof}

\begin{remark}
An abstract reason for this degeneration may be given as follows. Since our spaces are suitably nice, the Serre spectral sequence for the fibration \[p:E\to B\] agrees with the Leray spectral sequence in sheaf cohomology. The Leray spectral sequence is induced by the $t$-structure ascending filtration $\tau_{\leq\bullet } Rp_* \underline{R}_E$ of the total derived functor $Rp_*$ applied to the constant sheaf $\underline{R}_E$ of $R$ modules on $E$. Our nested sub-bundles \[p_{\leq k}:E_{\leq k}\to B\] give another filtration of this object by $Rp_{\leq k\ast}\underline{R}_E$ with the same subquotients. These two homotopy coherent filtrations are totally opposed, and this yields the desired degeneration and splitting.
\end{remark}

\begin{remark}
In the case when each quotient $F_{\leq k}/F_{\leq k-1}$ is a wedge of $k$-spheres, our assumptions imply the associated cellular chain complex for $F$ has trivial differentials. Proposition \ref{prop:ss} may be viewed as a $B$-parametrised variant of this situation.
\end{remark}

\begin{corollary}\label{cor:degen U(p)}
Let $U(p)/T\cdot S_p$ denote the unordered flag manifold. Then the Serre spectral sequence for the fibre bundle \[U(p)/T\times_{S_p} T^2\to U(p)/T\cdot S_p\] degenerates at the $E_2$ page for any choice of coefficients, and the associated filtration splits.
\end{corollary}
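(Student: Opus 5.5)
The plan is to apply Proposition \ref{prop:ss} directly, taking the nested sub-bundles to be those induced by the skeleta of the product cell structure on the fibre. First I would set up the bundle. The Weyl group $S_p=N(T)/T$ acts freely on $U(p)/T$ from the right, so $U(p)/T\to U(p)/T\cdot S_p$ is a principal $S_p$-bundle (a finite covering) over a connected base. The map $U(p)/T\times_{S_p} T^2\to U(p)/T\cdot S_p$ is then the associated fibre bundle with fibre $F=T^2=T\times T$, where $S_p$ acts on $T\times T$ by the diagonal permutation action. Since $T\cong (S^1)^p$, the fibre is $F\cong (S^1)^{2p}$, so $f=2p$. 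Any closed $S_p$-invariant subspace $F'\subseteq F$ gives a closed sub-bundle $U(p)/T\times_{S_p}F'$ with fibre $F'$, and local triviality is inherited from the covering.

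Next I would choose the filtration. Give $S^1$ the CW structure with one $0$-cell $\{1\}$ and one $1$-cell, and give $F=(S^1)^{2p}$ the product structure. Its $k$-skeleton $F_{\leq k}$ is the set of $2p$-tuples in which at most $k$ coordinates differ from $1$. The group $S_p$ acts by permuting the coordinates of each $T$-factor simultaneously. This action permutes the cells, so each $F_{\leq k}$ is closed and $S_p$-invariant. Setting $E_{\leq k}:=U(p)/T\times_{S_p}F_{\leq k}$ gives nested closed sub-bundles $E_{\leq 0}\subset\cdots\subset E_{\leq 2p}=E$, and each fibre $F_{\leq k}$ is a finite CW complex of dimension $\leq k$.

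The remaining step, and the only one needing care, is the cohomological hypothesis of Proposition \ref{prop:ss} for arbitrary coefficients $R$: the restriction $H^i(F,R)\to H^i(F_{\leq k},R)$ must be an isomorphism for all $i\leq k$. For a general skeleton this can fail in degree $i=k$, since the map is then only injective. Here, however, the cellular cochain complex of the product structure on $(S^1)^{2p}$ has zero differentials, because it is the tensor product of copies of the cellular complex of $S^1$, whose differential is zero. The cellular cochains of $F_{\leq k}$ are the truncation of those of $F$ to degrees $\leq k$, again with zero differential. In every degree $i\leq k$ both sides therefore have all $i$-cochains as cocycles and no coboundaries, and the restriction map is the identity on cochains. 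This holds over any ring $R$, so Proposition \ref{prop:ss} applies, giving degeneration at $E_2$ and splitting of the filtration for every choice of coefficients.
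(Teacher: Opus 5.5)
Your proposal is correct and follows the paper's proof. You define the sub-bundles $E_{\leq k}=U(p)/T\times_{S_p}(T^2)_{\leq k}$ from the $S_p$-invariant skeleta of the product cell structure on $(S^1)^{2p}$, and the vanishing of the cellular differentials gives the hypothesis of Proposition \ref{prop:ss} over any $R$; your check that the degree-$k$ restriction is an isomorphism precisely because $d\colon C^k\to C^{k+1}$ vanishes spells out the step the paper leaves implicit.
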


\begin{proof}
Taking $T$ to be the diagonal unitary matrices, we see that the $S_p$ monodromy action is given by permutation of diagonal entries. Taking the two cell CW-structure on $S^1$, and the product CW-structure on $T^2\cong (S^1)^p\times (S^1)^p$, we see that the skeleta $(T^2)_{\leq k}$ of $T^2$ are $S_p$-invariant and we may define the sub-bundles of $E=U(p)/T\times_{S_p} T^2$ by \[E_{\leq k}:=U(p)/T\times_{S_p} (T^2)_{\leq k}\, . \]
The CW-skeleta of the fibres have trivial cellular boundary maps for any choice of coefficients, so the assumptions of the proposition are satisfied, and we have the desired degeneration.
\end{proof}

We will leverage the degeneration of the spectral sequence to find torsion in the cohomology of $C_2(U(p))_1$, coming from torsion in the cohomology of $U(p)/T\cdot S_p$.

\begin{proposition}
The space $C_2(U(p))_1$ has $p$-torsion in its cohomology in degree $p^2+2$.
\end{proposition}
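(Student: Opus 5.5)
The plan is to transport a $p$-torsion class from Baird's desingularisation $E:=U(p)/T\times_{S_p}T^2$ (Proposition \ref{prop:desing}) to $C_2(U(p))_1$. The class on $E$ will be located with the split degeneration of Corollary \ref{cor:degen U(p)}, and the transport will be controlled by a comparison over the singular locus. Write $B:=U(p)/T\cdot S_p=U(p)/N(T)$, which has dimension $p^2-p$, and use $\Z_{(p)}$-coefficients throughout.

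\textbf{Step 1: torsion in $E$.} By Corollary \ref{cor:degen U(p)},
\[
H^n(E)\cong\bigoplus_{a+b=n}H^a(B,\mathcal H^b(T^2)).
\]
The top fibre class $b=2p$ spans $H^{2p}(T^2)=\Lambda^{2p}$. $S_p$ acts on it by $\operatorname{sgn}^2$, which is trivial. Hence $H^a(B,\Z_{(p)})$ is a direct summand of $H^{a+2p}(E)$. For $n=p^2+2$ this summand is $H^{(p-1)^2+1}(B)$. So the first task is to exhibit $p$-torsion in $H^{(p-1)^2+1}(U(p)/N(T),\Z)$. I would do this by comparing with $H^*(BN(T))$ and $H^*(BU(p))$ through the fibration $U(p)/N(T)\to BN(T)\to BU(p)$. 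The $p$-torsion comes from the Sylow subgroup $C_p\subset S_p$ and the associated non-trivial classes in $H^*(BN(T))$ not coming from $BU(p)$. If the $b=2p$ summand gives the wrong degree, I would instead search the other summands $H^a(B,\mathcal H^b)$, for example $b=0$ with $a=p^2+2-2p$ shifted appropriately, for a twisted coefficient group with $p$-torsion in the right place. This computation has to be done concretely, and pinning down the exact degree $(p-1)^2+1$ is the computational heart of the argument.

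\textbf{Step 2: comparison with $C_2(U(p))_1$.} Let $S\subset C_2(U(p))_1$ be the closed subspace of pairs containing no regular element, and let $S'\subset E$ be its preimage. The map $\pi\colon E\to C_2(U(p))_1$ is proper and a homeomorphism off $S'$, so relative Čech cohomology gives
\[
H^*(C_2(U(p))_1,S)\cong H^*(E,S').
\]
Comparing the two long exact sequences then bounds $H^n(C_2(U(p))_1)$ against $H^n(E)$, $H^*(S)$ and $H^*(S')$.

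\textbf{Step 3: no $p$-torsion on the singular locus.} This is the key point. $S$ and $S'$ are stratified by joint centralisers, which are proper Levi subgroups $L\cong U(p_1)\times\cdots\times U(p_s)$ with every $p_i<p$. Their Weyl groups have order prime to $p$. The strata are bundles of the form $U(p)/L\times_{W}(\text{subtori})$, analogous to the base case of Corollary \ref{cor:constantrank}, and $p$-locally they should have torsion-free cohomology. I would prove this by a Mayer--Vietoris induction over the stratification of $S$ and $S'$, using transfer arguments for the groups of order prime to $p$. Given that, the connecting maps carry no $p$-torsion. Then a $p$-torsion class in $H^{p^2+2}(E)$ that restricts to zero on $S'$ (automatic, since the restriction lands in a torsion-free group) lifts to $H^{p^2+2}(E,S')\cong H^{p^2+2}(C_2(U(p))_1,S)$. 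Its image in $H^{p^2+2}(C_2(U(p))_1)$ is then shown to be non-zero and torsion, using torsion-freeness of $H^{*}(S)$ in the adjacent degree.

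\textbf{Main obstacle.} I expect the main obstacle to be Step 1 together with the degree bookkeeping in Step 3. One has to identify a specific $p$-torsion class of $U(p)/N(T)$ and show that it survives the comparison, not merely that some torsion exists in $E$. I would first test the case $p=2$, where $C_2(U(2))_1$ is small enough to verify by hand, and use it to calibrate the degree before writing the general argument.
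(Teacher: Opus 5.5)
Your Step~1 reduction is the right one. Indeed $(p-1)^2+1=p^2-2p+2$, and since $\mathcal H^{2p}(T^2)=\mathrm{sgn}^{2}$ is trivial, $H^{p^2-2p+2}(B;\Z_{(p)})$ is a direct summand of $H^{p^2+2}(E;\Z_{(p)})$. But you never produce the torsion class. The sketch via $U(p)/N(T)\to BN(T)\to BU(p)$ is not an argument, and you leave open whether the degree is even correct. The missing input is the top mod-$p$ cohomology of $B=U(p)/T\cdot S_p$. The paper takes it from Guerra--Janna \cite{GuerraJanna}: $H^*(B;\F_p)$ is one-dimensional in degree $p^2-2p+2$ and zero above it. Hence $H^{p^2-2p+3}(B;\Z_{(p)})=0$, so the Bockstein sequence lifts the mod-$p$ generator to $H^{p^2-2p+2}(B;\Z_{(p)})$. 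The lift is torsion because $B$ is rationally acyclic.

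The more serious gap is Step~3, whose claim is false as stated. Write $C:=C_2(U(p))_1$ and let $Z$ be the centre of $U(p)$.

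\textbf{The central stratum.} Your $S$ contains the central pairs $Z^2$. Their joint centraliser is $U(p)$ itself, not a proper Levi subgroup, and $\pi^{-1}(Z^2)=B\times Z^2$ carries exactly the $p$-torsion from Step~1. For $p=2$ this is the whole of $S$: non-regular elements of $U(2)$ are central, so $S=Z^2$ and $S'=\R P^2\times Z^2$, which has $2$-torsion.

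\textbf{The $T$-stratum.} Your transfer argument does not cover the stratum with joint centraliser $T$ either, since there $N(T)/T=S_p$ acts freely.

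\textbf{The final step.} Even granting torsion-freeness of $H^*(S)$ and $H^*(S')$, your conclusion would not follow. A lift to $H^{p^2+2}(E,S')$ of a torsion class need not be torsion. A class of $H^{p^2+2}(C,S)$ may also die in $H^{p^2+2}(C)$, because $H^{p^2+1}(S)/\operatorname{im}H^{p^2+1}(C)$ can have torsion. Dimension does not rescue this: for $p\ge 5$ one has $\dim S'=p^2+p-2>p^2+2$.

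\textbf{The missing idea.} Away from the excised set you only need $\pi$ to have $\Z_{(p)}$-acyclic fibres, not to be a homeomorphism. Over a non-central pair the fibre of $\pi$ is a product of spaces $U(k)/T\cdot S_k$ with $k<p$, which are $\Z_{(p)}$-acyclic; this is where your prime-to-$p$ Weyl group observation belongs. By Vietoris--Begle, $E/A\to C/Z^2$ is then a $\Z_{(p)}$-cohomology isomorphism, where $A=\pi^{-1}(Z^2)=B\times Z^2$ has dimension $p^2-p+2$. Comparing the long exact sequences of $(E,A)$ and $(C,Z^2)$ gives $\pi^*\colon H^n(C;\Z_{(p)})\cong H^n(E;\Z_{(p)})$ for all $n\ge p^2-p+4$, in particular for $n=p^2+2$. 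The split summand from Step~1 then transfers directly.
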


\begin{proof}
  Following Proposition \ref{prop:desing}, we have the desingularisation map \[\pi:U(p)/T\times_{S_p} T^2\to C_2(U(p))_1\, .\]
 Let $Z\subset T$ be the centre of $U(p)$. Within our domain space \[X:=U(p)/T\times_{S_p} T^2\] we have the subspace $A$ of elements with central coordinates in $T^2$\[A:=U(p)/T\times_{S_p} Z^2,\] and its image $\overline{A}$ under $\pi$: \[\overline{A}=Z^2.\]
 Taking quotients, we have the following commuting diagram:\[\begin{tikzcd}
 X\arrow[r,"\pi"]\arrow[d]&C_2(U(p))_1\arrow[d]\\
 X/A\arrow[r,"\tilde{\pi}"]&C_2(U(p))_1/\overline{A}
 \end{tikzcd}\]

 The map $\tilde{\pi}$ has $\Z_{(p)}$-acyclic fibres; following \cite{Baird}, the fibre of the map ${\pi}$ over a pair of commuting elements $(x_1,x_2)$ identifies with the space of orthogonal line decompositions refining the simultaneous eigenspace decomposition of $x_1$ and $x_2$. In particular, away from $Z^2$ these are products of the spaces $U(k)/T\cdot S_k$ for $k<p$, and are thus $\mathbb{Z}_{(p)}$-acyclic. The map $\tilde{\pi}$ is therefore a $\Z_{(p)}$-cohomology isomorphism by the Vietoris-Begle theorem.
 
 The spaces $A$ and $\overline{A}$ have dimensions $p^2-p+2$ and $2$ respectively. Therefore, by the long exact sequence in relative cohomology, $X$ and $C_2(U(p))_1$ have the same $\mathbb{Z}_{(p)}$-cohomology in degrees at least $p^2-p+4$.
 
 It remains to supply torsion classes in the $\mathbb{Z}_{(p)}$-cohomology of $X$. For this, we use the degeneration of the Serre spectral sequence of Corollary \ref{cor:degen U(p)} in bidegree $(p^2-2p+2,2p)$. The $\F_p$-cohomology of the base $U(p)/T\cdot S_p$ is one dimensional in degree $p^2-2p+2$, and this degree is maximal, by \cite[Corollary 6.9]{GuerraJanna}. The Bockstein sequence shows that a generator lifts to a class in $H^{p^2-2p+2}(U(p)/T\cdot S_p,\Z_{(p)})$, which must be torsion, because $\tilde{H}^{*}(U(p)/T\cdot S_p,\mathbb{Q})=0$.
 
 The corresponding local system $\mathcal{H}^{2p}(T^2,\Z_{(p)})$ is trivial, because it is the square of the sign representation of $S_p$.  Therefore, our argument supplies a nonzero $p$-torsion class in \[H^{p^2-2p+2}(U(p)/T\cdot S_p,H^{2p}(T^2;\Z_{(p)}))\] 
 which, by Corollary \ref{cor:degen U(p)}, is a direct summand of $H^{p^2+2}(X,\Z_{(p)})$.
\end{proof}

\begin{remark}
The cohomology of the space $U(p)/T\cdot S_p$ is rationally trivial, but has large amounts of torsion \cite{GuerraJanna}. This is the essential source of the torsion showing up in our proof. It would be interesting to have an algebro-geometric explanation for this large amount of torsion.
\end{remark}

\section{The Lie algebra case}

In this section we indicate how the proof adapts to the setting of commuting $m$-tuples in the Lie algebra of a compact Lie group, thus proving Theorem \ref{thm:mainliealgebra}.

Let $K$ be a connected compact Lie group with Lie algebra $\mathfrak{k}$. Let $W$ be the Weyl group of $K$. The following statement from \cite[Theorem D]{AGG} is the analogue in the Lie algebra setting of Corollary \ref{cor:constantrank}.

\begin{proposition}
    Let $k$ be a field of characteristic coprime to $|W|$. Then the one-point compactification $C_m(\mathfrak{k})^+$ is a $k$-homology sphere. In particular, for any $x\in K$ we have an equality of ranks
    \[
    \dim_k H^\ast_c(C_m(\mathfrak{k}),k) = \dim_k H^\ast_c(C_m(\mathfrak{z}_{\mathfrak{k}}(x)),k)\, ,
    \]
    where $\mathfrak{z}_\mathfrak{k}(x)=\{X\in \mathfrak{k}\mid \mathrm{Ad}(x)X=X\}$ is the Lie algebra of $Z_K(x)$.
\end{proposition}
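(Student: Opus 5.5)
The plan is to imitate the compact-group argument of Corollary \ref{cor:constantrank}, replacing the torus $T^m$ by the Cartan subalgebra $\mathfrak{t}^m$ and ordinary cohomology by compactly supported cohomology.

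\textbf{Step 1: desingularisation.} Fix a maximal torus $T\subset K$ with Lie algebra $\mathfrak{t}$. I will consider the Lie algebra version of Proposition \ref{prop:desing}, namely the map
\[
\pi\colon K/T\times_W \mathfrak{t}^m\to C_m(\mathfrak{k}),\qquad [gT,(X_1,\dots,X_m)]\mapsto (\mathrm{Ad}(g)X_1,\dots,\mathrm{Ad}(g)X_m).
\]
I will check three things about $\pi$.
\begin{enumerate}
\item It is surjective, because commuting elements of $\mathfrak{k}$ lie in a common Cartan subalgebra.
\item It is proper, because $K/T$ is compact and the $\mathfrak{t}^m$-coordinates are determined up to $W$ by the image.
\item Its fibre over a tuple $\underline{X}$ is $Z_K(\underline{X})/N_{Z_K(\underline{X})}(T)$. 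Here $Z_K(\underline{X})$ is connected, being the centraliser of a torus. This fibre is the unordered flag manifold of a connected compact group whose Weyl group is a subgroup of $W$.
\end{enumerate}
Since $\operatorname{char} k$ is coprime to $|W|$, each fibre is $k$-acyclic: its $k$-cohomology is the $W_{\underline{X}}$-invariants of the regular representation. The Vietoris--Begle theorem for proper maps, applied to compactly supported cohomology (equivalently, to the induced map of one-point compactifications), then shows that $\pi^*$ is an isomorphism $H^*_c(C_m(\mathfrak{k}),k)\cong H^*_c(K/T\times_W\mathfrak{t}^m,k)$.

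\textbf{Step 2: transfer.} Because $|W|$ is invertible in $k$ and $W$ acts freely on $K/T\times\mathfrak{t}^m$, we have
\[
H^*_c(K/T\times_W\mathfrak{t}^m,k)\cong\bigl(H^*(K/T,k)\otimes H^*_c(\mathfrak{t}^m,k)\bigr)^W .
\]
The group $H^*_c(\mathfrak{t}^m,k)$ is one-dimensional, concentrated in degree $mr$, and $W$ acts on it through the character $\det^m=\mathrm{sgn}^m$ of the reflection representation. Since $H^*(K/T,k)$ is the regular $kW$-module, tensoring it with a one-dimensional character gives the regular module again, and the $W$-invariants are one-dimensional. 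Hence $H^*_c(C_m(\mathfrak{k}),k)$ is one-dimensional. Equivalently, $\tilde H^*(C_m(\mathfrak{k})^+,k)$ is one-dimensional, so $C_m(\mathfrak{k})^+$ is a $k$-homology sphere.

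\textbf{Step 3: centralisers.} For $x\in K$, the Lie algebra $\mathfrak{z}_{\mathfrak{k}}(x)$ is the Lie algebra of $Z_K(x)_0$. This is a connected compact group containing a conjugate of $T$, hence of full rank, and its Weyl group $W_x$ embeds in $W$. So $\operatorname{char}k$ is also coprime to $|W_x|$, and Steps 1--2 applied to $Z_K(x)_0$ give total dimension $1$ again. This proves the equality of ranks.

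\textbf{Main obstacle.} I expect the delicate point to be Step 1. One must make the fibre identification of $\pi$ precise: the fibre should be the space of choices of a maximal torus of $Z_K(\underline{X})$ containing the tuple, which is $Z_K(\underline{X})/N(T)$. One must also verify the acyclicity of these fibres over $k$, and justify Vietoris--Begle for the noncompact, proper situation. I would handle the latter by passing to one-point compactifications, so that $\pi^+$ is a map of compact Hausdorff spaces whose fibre over the point at infinity is a single point.
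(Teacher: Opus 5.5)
Your argument is correct and is essentially the intended one. The paper gives no proof of its own here and cites \cite[Theorem D]{AGG}, but your route is exactly the Lie algebra analogue of the paper's proof of Corollary \ref{cor:constantrank}: the proper desingularisation $K/T\times_W\mathfrak{t}^m\to C_m(\mathfrak{k})$ with $k$-acyclic fibres $Z_K(\underline{X})/N_{Z_K(\underline{X})}(T)$, Vietoris--Begle for compactly supported cohomology (the same input the paper invokes via \cite[Lemma A.13]{AGG}), and the transfer computation $(kW\otimes\mathrm{sgn}^m)^W\cong k$.
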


Combining this with Lemma \ref{lem:recognition} (applied to $C_m(\mathfrak{k})^+$) we obtain:

\begin{corollary}
    If $p$-torsion occurs in the compactly supported cohomology of $C_m(\mathfrak{k})$, then $p$ divides the order of $W$.
\end{corollary}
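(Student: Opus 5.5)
The statement is the Lie algebra analogue of Corollary \ref{cor:modular}, and I would prove it exactly as that corollary was proved: compare total ranks and invoke Lemma \ref{lem:recognition}. The key is to pass to the one-point compactification $X:=C_m(\mathfrak{k})^+$. Since $C_m(\mathfrak{k})$ is a closed real algebraic cone in $\mathfrak{k}^m$, its one-point compactification is a compact semialgebraic set. It is therefore homotopy equivalent to a finite CW-complex and locally contractible, so \v{C}ech and singular cohomology agree.

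The argument then runs in three steps. First, I identify the compactly supported cohomology with reduced cohomology of the compactification: $H^\ast_c(C_m(\mathfrak{k}),R)\cong \tilde H^\ast(X,R)$ for any coefficients $R$. This uses that $C_m(\mathfrak{k})$ is locally compact Hausdorff and that $X$ is nice enough for \v{C}ech and singular theories to coincide. In particular, $p$-torsion in $H^\ast_c(C_m(\mathfrak{k}),\Z)$ is the same as $p$-torsion in $H^\ast(X,\Z)$, because the reduced and unreduced cohomology differ only by a free summand $\Z$ in degree $0$.

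Second, suppose $p\nmid |W|$. Applying the preceding proposition with $k=\Q$ and with $k=\F_p$, both characteristics are coprime to $|W|$, so $X$ is both a $\Q$-homology sphere and an $\F_p$-homology sphere. The sphere dimension is the same in both cases, namely $\dim C_m(\mathfrak{k})$, but only the total rank matters here. We obtain
\[
\dim_\Q H^\ast(X,\Q)=2=\dim_{\F_p}H^\ast(X,\F_p).
\]

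Third, Lemma \ref{lem:recognition} then shows that $X$ has no $p$-torsion in integral cohomology. Taking the contrapositive, $p$-torsion can occur only when $p$ divides $|W|$.

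The only step needing care is the first: justifying the identification with reduced cohomology of $X$ and checking that $X$ has the homotopy type of a finite CW-complex. I would handle this by citing the standard triangulability of compact semialgebraic sets, which triangulates $X$ compatibly with the point at infinity. Everything else is the formal rank comparison.
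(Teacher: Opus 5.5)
Your argument is correct and is the paper's own: it invokes the homology-sphere statement from \cite[Theorem D]{AGG} with $k=\Q$ and $k=\F_p$ and applies Lemma \ref{lem:recognition} to $C_m(\mathfrak{k})^+$. Your justifications of $H^\ast_c\cong\tilde H^\ast(C_m(\mathfrak{k})^+)$ and of finite CW type are what the paper leaves implicit. One harmless slip: the homology sphere need not have dimension $\dim C_m(\mathfrak{k})$ (for even $m$ it is $m\dim\mathfrak{t}$; e.g.\ $C_2(\mathfrak{su}_2)^+$ is a rational $2$-sphere although $\dim C_2(\mathfrak{su}_2)=4$), but as you say, only the total rank $2$ is used.
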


Applying Floyd's inequality to the adjoint action of $K$ on $C_m(\mathfrak{k})^+$, the same reduction steps as in the Lie group case reduce the proof of Theorem \ref{thm:mainliealgebra} to:

\begin{proposition}
    For every prime $p$ and every integer $m\geq 2$, the space $C_m(\mathfrak{su}_p)$ has $p$-torsion in its compactly supported integral cohomology.
\end{proposition}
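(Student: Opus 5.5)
We aim to show that $C_m(\mathfrak{su}_p)$ has $p$-torsion in $H^\ast_c(-,\Z)$, and we follow the strategy of the group case. First we reduce to $m=2$. The projection $\mathfrak{su}_p^m\to\mathfrak{su}_p^2$ onto the first two coordinates and the inclusion $(X_1,X_2)\mapsto (X_1,X_2,0,\dots,0)$ are proper and exhibit $C_2(\mathfrak{su}_p)$ as a retract of $C_m(\mathfrak{su}_p)$. On one-point compactifications this makes $C_2(\mathfrak{su}_p)^+$ a retract of $C_m(\mathfrak{su}_p)^+$, so torsion in $H^\ast_c$ passes up from $m=2$ to all $m\geq 2$. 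It also costs nothing to replace $\mathfrak{su}_p$ by $\mathfrak{u}_p=\mathfrak{su}_p\oplus i\R$: we have $C_2(\mathfrak{u}_p)=C_2(\mathfrak{su}_p)\times\R^2$, and $H^\ast_c$ is only shifted by $2$.

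Next we use the Lie algebra desingularisation, the analogue of Proposition \ref{prop:desing}:
\[
\pi\co X:=U(p)/T\times_{S_p}\mathfrak{t}^2\to C_2(\mathfrak{u}_p).
\]
Here $\mathfrak t\cong\R^p$ consists of the diagonal matrices. The map $\pi$ is proper and surjective. As in \cite{Baird}, its fibre over a commuting pair is a product of unordered flag manifolds $U(k)/T\cdot S_k$, one for each simultaneous eigenspace of dimension $k$. Away from the central locus $A'=\mathfrak z^2\cong\R^2$, every such $k$ is less than $p$, so these fibres are $\Z_{(p)}$-acyclic. The preimage of $A'$ is $A=U(p)/T\cdot S_p\times\R^2$. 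The Vietoris–Begle theorem, applied to $X\setminus A\to C_2(\mathfrak u_p)\setminus A'$ (proper with acyclic fibres), gives an isomorphism
\[
H^\ast_c(X\setminus A,\Z_{(p)})\cong H^\ast_c(C_2\setminus A',\Z_{(p)}).
\]
Comparing the long exact sequences in compactly supported cohomology for the pairs (open complement, closed $A$ or $A'$) shows that $X$ and $C_2(\mathfrak u_p)$ have the same $\Z_{(p)}$ compactly supported cohomology in degrees at least $\dim A+2=p^2-p+4$.

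It remains to find $p$-torsion in $H^\ast_c(X,\Z_{(p)})$ in high degree. The space $X$ is a vector bundle of rank $2p$ over $B=U(p)/T\cdot S_p$, namely $\mathfrak t^2=\R^p\otimes\R^2$ with $S_p$ permuting coordinates. The Thom isomorphism with twisted coefficients gives
\[
H^n_c(X,\Z_{(p)})\cong H^{n-2p}(B,o),
\]
where $o$ is the orientation local system of this bundle. The orientation character is $\mathrm{sgn}^2=1$, so $o$ is trivial. Hence $H^{n}_c(X)\cong H^{n-2p}(B,\Z_{(p)})$; this is the analogue of the bidegree $(p^2-2p+2,2p)$ class used in the group case. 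By \cite[Corollary 6.9]{GuerraJanna} and the Bockstein argument already used in the group case, $H^{p^2-2p+2}(B,\Z_{(p)})$ contains nonzero torsion. This produces $p$-torsion in $H^{p^2+2}_c(X)$, and $p^2+2\geq p^2-p+4$ for $p\geq 2$, which transfers the torsion to $C_2(\mathfrak u_p)$ and hence, after the shift by $2$, to $C_2(\mathfrak{su}_p)$.

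The main obstacle is the Vietoris–Begle step in the compactly supported, noncompact setting. We need to check that $\pi$ restricted to the complement of $A$ is proper with $\Z_{(p)}$-acyclic fibres and that the degree bookkeeping is correct; we expect this to follow exactly as in the group case. One further point needs care: the boundary of the torsion range is degree $p^2-p+3$, while the class sits in degree $p^2+2$, which lies strictly above $p^2-p+3$. So the class is safely inside the range where the two cohomologies agree.
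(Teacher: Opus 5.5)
Your argument for $m=2$ is correct. Removing the central stratum, applying Vietoris--Begle over its complement, and using the Thom isomorphism for $U(p)/T\times_{S_p}\mathfrak{t}^2\to U(p)/T\cdot S_p$ gives $p$-torsion in $H^{p^2}_c(C_2(\mathfrak{su}_p),\Z_{(p)})$. The Thom isomorphism is untwisted here because the orientation character is $\mathrm{sgn}^2$. This class is the top-row class of the paper's sphere-bundle spectral sequence.

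The gap is the reduction from general $m$ to $m=2$. The coordinate projection $C_m(\mathfrak{su}_p)\to C_2(\mathfrak{su}_p)$ is not proper: its fibre over $(0,0)$ is a copy of $C_{m-2}(\mathfrak{su}_p)$. So it does not extend to one-point compactifications and induces nothing on $H^*_c$. No other choice of retraction can repair this. Your own Thom-isomorphism argument, run with $\Q$-coefficients, shows that $H^*_c(C_m(\mathfrak{su}_p),\Q)\cong\Q$. This class sits in degree $m(p-1)$ for even $m$ and in degree $p^2-p+m(p-1)$ for odd $m$. For $m\geq 3$ this differs from $2(p-1)$, so $H^*_c(C_2(\mathfrak{su}_p),\Q)$ cannot be a retract of $H^*_c(C_m(\mathfrak{su}_p),\Q)$. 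Hence no proper retraction $C_m(\mathfrak{su}_p)\to C_2(\mathfrak{su}_p)$ exists. The retract trick in the group case works only because ordinary cohomology is functorial for all maps. As written, the cases $m\geq 3$ are not proved.

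You have to treat each $m$ directly, which is what the paper does.
\begin{itemize}
\item Since $\mathfrak{su}_p$ has trivial centre, there is no exceptional stratum. The map $SU(p)/T\times_{S_p}S(\mathfrak{t}^m)\to C^1_m(\mathfrak{su}_p)$ is a $\Z_{(p)}$-isomorphism outright.
\item $C_m(\mathfrak{su}_p)^+$ is the unreduced suspension of $C^1_m(\mathfrak{su}_p)$.
\item The Serre spectral sequence of the sphere bundle over $SU(p)/T\cdot S_p$ has only two rows. Its single possible differential lands in base degrees $\geq m(p-1)$, so low-degree torsion of the base survives. For $m\geq 3$ this covers the lowest $p$-torsion of the base.
\end{itemize}
Within your own framework, replacing $\mathfrak{t}^2$ by $\mathfrak{t}^m$ works verbatim for even $m$. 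The orientation character $\mathrm{sgn}^m$ is trivial, and the class lands in degree $p^2-2p+2+mp\geq p^2-p+m+2$. For odd $m$, however, the Thom isomorphism is twisted by $\mathrm{sgn}$. You would then need an extra argument producing $p$-torsion in $H^*(U(p)/T\cdot S_p,\Z_{(p)}^{\mathrm{sgn}})$ in high enough degree, for instance via twisted Poincar\'e duality, since the orientation character of $U(p)/T\cdot S_p$ is $\mathrm{sgn}$.

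Conversely, your top-degree class is worth keeping for $m=2$. There the low-degree torsion of the base sits in degree $2p-2=m(p-1)$. For example, for $p=3$ one checks that $H^3(U(3)/T\cdot S_3,\Z_{(3)})=0$. One also checks that $H^4\cong\Z/3$ is generated by the Euler class of $\mathfrak{t}^2$, so this class is killed by the transgression. Your class in degree $p^2$ is unaffected.
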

\begin{proof}
    Let $T\subset SU(p)$ be a maximal torus and $\mathfrak{t}$ its Lie algebra. As an $S_p$-representation $\mathfrak{t}$ is the $(p-1)$-dimensional standard representation of $S_p$. Let $S(\mathfrak{su}_p^m)\subset \mathfrak{su}_p^m$ and $S(\mathfrak{t}^m)= \mathfrak{t}^m\cap S(\mathfrak{su}_p^m)$ denote the unit spheres with respect to an Ad-invariant inner product. Intersecting $C_m(\mathfrak{su}_p)$ with $S(\mathfrak{su}_p^m)$ to obtain $C_m^1(\mathfrak{su}_p)$, the Lie algebra variant of the desingularisation map, \[SU(p)/T\times_{S_p} S(\mathfrak{t}^m)\to C_m^1(\mathfrak{su}_p)\,,\] is a $\Z_{(p)}$-cohomology isomorphism by the Vietoris-Begle theorem (cf. \cite[Lemma A.13]{AGG}).
    As $C_m(\mathfrak{su}_p)^+$ is the (unreduced) suspension of $C^1_m(\mathfrak{su}_p)$, it suffices to show that the total space of the sphere bundle
    \[
    E:=SU(p)/T\times_{S_p} S(\mathfrak{t}^m) \to SU(p)/T\cdot S_p
    \]
    has $p$-torsion in its singular $\Z_{(p)}$-cohomology.
    
    The $E_2$-page of the Serre spectral sequence is concentrated in two rows of vertical degrees $0$ and $\dim S(\mathfrak{t}^m)=m(p-1)-1$. The local system $\mathcal{H}^{m(p-1)-1}(S(\mathfrak{t}^m),\Z_{(p)})$ is the trivial module or the sign module, depending on whether $m$ is even or odd. In either case, a non-zero differential must have target in horizontal degrees at least $m(p-1)$. 
    
    If $p$ is odd, then by \cite[Corollary 6.9]{GuerraJanna} the base space $SU(p)/T\cdot S_p$ has $p$-torsion in degree $2p-3< m(p-1)$, which thus survives the spectral sequence and gives a $p$-torsion class in $G^{2p-3}H^{2p-3}(E,\Z_{(p)})\subset H^{2p-3}(E,\Z_{(p)})$.
    
    If $p=2$, then the base space is just $\R P^2$. Now if $m\geq 3$, then the $2$-torsion class in $H^2(\R P^2,\Z_{(2)})$ survives, while if $m=2$, then the local system is trivial and the $2$-torsion class in bidegree $(2,1)$ survives. 
\end{proof}
Theorem \ref{thm:pettetsouto}, identifying the cohomology of $C_m(G)_1$ and $C_m(K)_1$, fails in the Lie algebra case. In fact, for a complex reductive Lie algebra $\mathfrak{g}$, the total rank of the compactly supported rational cohomology of $C_m(\mathfrak{g})$ does not depend only on the rank of $\mathfrak{g}$:

\begin{proposition} \label{prop:complexreductiveliealgebra}
Consider the rank $2$ complex Lie algebras $\mathfrak{sp}_4^\mathbb{C}$ and $\mathfrak{gl}_2^\mathbb{C}$. Then the compactly supported cohomology of their varieties of commuting pairs have different ranks:
\begin{alignat*}{1}
\dim_\mathbb{Q}H^*_c(C_2(\mathfrak{gl}_2^\mathbb{C}),\Q) & =3\,, \\
\dim_\mathbb{Q}H^*_c(C_2(\mathfrak{sp}_4^\mathbb{C}),\Q) & \geq 9\, .
\end{alignat*}
\end{proposition}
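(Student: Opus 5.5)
The plan is to compute the first group exactly and bound the second using the cone structure, a fibration over the projectivisation, and torus localisation. Throughout we use the scaling $\C^\times$-action on pairs: $C_2(\mathfrak g)$ is an affine cone with vertex $0$, so its one-point compactification is the unreduced suspension of the link $L=C_2(\mathfrak g)\cap S(\mathfrak g^2)\simeq C_2(\mathfrak g)\setminus 0$. This gives
\[
\dim_\Q H^*_c(C_2(\mathfrak g),\Q)=\dim_\Q H^*(L,\Q)-1 .
\]
Moreover, $C_2(\mathfrak g)\setminus 0$ is a $\C^\times$-bundle over the projectivisation $P=\mathbb P(C_2(\mathfrak g))$. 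Its Euler class is the hyperplane class $h$, so the Gysin sequence gives
\[
\dim H^*(L)=\dim\ker(h\cdot)+\dim\operatorname{coker}(h\cdot)=2\dim\operatorname{coker}(h\cdot)
\]
on $H^*(P,\Q)$, the second equality because $H^*(P)$ is finite-dimensional.

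\emph{The case $\mathfrak{gl}_2^\C$.} Split $\mathfrak{gl}_2^\C=\C\oplus\mathfrak{sl}_2^\C$. Two elements commute if and only if their $\mathfrak{sl}_2$-parts commute, so $C_2(\mathfrak{gl}_2^\C)\cong\C^2\times C_2(\mathfrak{sl}_2^\C)$, and the factor $\C^2$ only shifts compactly supported cohomology by $4$. Identify $\mathfrak{sl}_2^\C\cong\mathfrak{so}_3^\C\cong(\C^3,\times)$. Then $[X,Y]=0$ if and only if $X\times Y=0$, i.e. the $3\times 2$ matrix $(X\,Y)$ has rank $\le 1$. Hence $C_2(\mathfrak{sl}_2^\C)$ is the affine cone over the Segre variety $P=\mathbb P^2\times\mathbb P^1$, with $h=h_1+h_2$. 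We get
\[
\operatorname{coker}(h\cdot)=\Q[h_1,h_2]/(h_1^3,h_2^2,h_1+h_2)\cong\Q[h_1]/(h_1^2),
\]
which has rank $2$. So $\dim H^*(L)=4$ and $\dim H^*_c=3$.

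\emph{The case $\mathfrak{sp}_4^\C$.} Here the goal is $\dim H^*(L)\ge 10$, which is equivalent to $\dim\operatorname{coker}(h\cdot)\ge 5$ on $H^*(P)$.

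First, I compute $\dim H^*(P)$ (or at least $\chi(P)$) by localisation for the maximal torus $T$ of $Sp_4$. The $T$-fixed points of $P$ are lines spanned by pairs lying in a single weight space of $\mathfrak g\oplus\mathfrak g$. The zero weight contributes $\mathbb P(\mathfrak t\oplus\mathfrak t)=\mathbb P^3$. Each of the $8$ roots $\alpha$ contributes $\mathbb P(\mathfrak g_\alpha\oplus\mathfrak g_\alpha)=\mathbb P^1$. This gives $\chi(P)=4+16=20$. The same count for $\mathfrak{gl}_2$ (fixed locus $\mathbb P^3\sqcup\mathbb P^1\sqcup\mathbb P^1$, with $\chi=8$) differs from $\mathfrak{sp}_4$, which already signals that the totals depend on more than the rank.

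Next, I need $H^*(P)$ concentrated in even degrees, so that $\dim H^*(P)=20$. I would get this from a Białynicki-Birula-type decomposition for a generic $\C^\times\subset T$, or else stratify $C_2(\mathfrak g)$ by decomposition classes, i.e. by the centraliser type of the semisimple parts.

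The main obstacle is the final step: bounding $\operatorname{coker}(h\cdot)$ from below. Since $P$ is projective of complex dimension $11$, hard-Lefschetz-type or weight arguments would force the Jordan blocks of $h$ to be short, but $P$ is singular. What I would try first is to compute the Poincaré polynomial of $P$ from the Białynicki-Birula cells. From it, count the degrees where $h\cdot$ must fail to be surjective, namely where $b_{2i}>b_{2i-2}$, summing these jumps. I would aim to show this sum is at least $5$.

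If that fails, the fallback is to compute $H^*_c(C_2(\mathfrak{sp}_4^\C))$ directly from the stratification by the rank of $\mathfrak z(X)\cap\mathfrak z(Y)$. The regular stratum is fibred over $(\mathfrak t^2)^{\mathrm{reg}}/W$ with fibre $G/T$, and the lower strata would be handled inductively. Here $|W|=8$, compared with $|W|=2$ for $\mathfrak{gl}_2$, and this difference is what should push the count up to at least $9$.
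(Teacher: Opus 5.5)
Your $\mathfrak{gl}_2^\C$ computation is correct, and it differs harmlessly from the paper's. You use the Segre cone over $\mathbb P^2\times\mathbb P^1$ and the Gysin sequence, while the paper stratifies by the first coordinate; both give rank $3$. Your reduction $\dim H^*_c=2\dim\operatorname{coker}(h)-1$ and the value $\chi(P)=20$ are also fine.

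The gap is the $\mathfrak{sp}_4^\C$ bound. You leave the decisive step open, and the route you propose for it cannot work. Bialynicki-Birula gives affine cells only for smooth projective varieties, and $P=\mathbb P(C_2(\mathfrak{sp}_4^\C))$ is singular. In fact $H^*(P,\Q)$ is \emph{not} concentrated in even degrees. From the point count quoted in the paper,
\[
\#P(\F_q)=\frac{\#C_2(\mathfrak{sp}_4)(\F_q)-1}{q-1}=q^{11}+3q^{10}+4q^9+4q^8+3q^7+q^6+q^3+q^2+q+1\,.
\]
Since $P$ is projective of dimension $11$, each class $h^i\neq 0$ ($0\le i\le 11$) contributes the Frobenius eigenvalue $q^i$ in even degree. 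If all cohomology were even, every coefficient of $q^i$ would therefore be positive. But the coefficients of $q^4$ and $q^5$ vanish, so odd-degree classes must exist. Hence the Betti numbers cannot be read off from $P^T$, and your ``Betti jump'' count has no input.

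The fallback stratification is only a sketch; the connecting maps are the entire difficulty. The heuristic that $|W|=8$ pushes the count up is also not an argument: for compact $\mathfrak k$ the rational total rank is $1$ whatever $|W|$ is.

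The missing idea is arithmetic. Count $\F_q$-points (Fulman--Guralnick), then apply the Grothendieck trace formula and the \'etale-to-singular comparison. Because the trace identity holds for every $q=p^r$, the distinct eigenvalues $p^{12},p^{11},p^{10},p^8,p^7,p^6,p^4$ cannot cancel one another. The absolute values of their signed multiplicities $(1,2,1,-1,-2,-1,1)$ sum to $9$, which gives the bound.

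This input also repairs your own framework. Cup product with $h$ sends the generalized $p^{i-1}$-eigenspace of Frobenius on $H^*(P)$ into the generalized $p^{i}$-eigenspace, and it preserves the parity of degree. The alternating dimension $\sum_k(-1)^k\dim H^k(P)_{p^i}$ equals the coefficient $c_i$ of $q^i$ in $\#P(\F_q)$. Hence
\[
\dim\ker(h)+\dim\operatorname{coker}(h)\ \ge\ \sum_i|c_i-c_{i-1}|=10,
\]
that is, $\dim\operatorname{coker}(h)\ge 5$. This signed eigenvalue count, not Betti numbers, is the correct replacement for your jump argument.
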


\begin{proof}
Since $\mathfrak{gl}^\C_2$ splits as $\mathfrak{z}\oplus \mathfrak{sl}_2^\C$, where $\mathfrak{z}=\C$ is the centre, there is an isomorphism $C_2(\mathfrak{gl}_2^\C) \cong \C^2\times C_2(\mathfrak{sl}_2^\C)$. The space $C_2(\mathfrak{sl}_2^\C)$ has compactly supported rational cohomology of total rank $3$; this can be computed from the projection \[C_2(\mathfrak{sl}^\C_2)\to \mathfrak{sl}^\C_2,\quad (X,Y)\mapsto X\, .\]
The fibre over any non-zero $X$ is $\mathfrak{z}_{\mathfrak{sl}_2^\C}(X)\cong \C$ (in $\mathfrak{sl}_2^\C$ any non-zero element is regular), while the fibre over $0$ is $\mathfrak{sl}_2^\C\cong \C^3$. This gives a stratification of $C_2(\mathfrak{sl}_2^\C)$ with one open stratum $U\cong (\mathfrak{sl}_2^\C\backslash \{0\})\times \C$ and a closed complement $\C^3$, hence a long exact sequence
\[
\cdots \to H_c^k(U) \to H_c^k(C_2(\mathfrak{sl}_2^\C)) \to H^k_c(\C^3)\to H^{k+1}_c(U) \to \cdots
\]
The cohomology of $U$ is in degrees $3,8$, and the cohomology of $\C^3$ is in degree $6$; the total rank is $3$, which proves the first part.

The space $C_2(\mathfrak{sp}^\C_4)$ has much more cohomology, however. We obtain our lower bound by counting points over finite fields (see the work of Hausel and Rodriguez-Villegas \cite{HauselRodriguez} for a spectacular application of this method in the context of character varieties). The complex variety $C_2(\mathfrak{sp}_4^\mathbb{C})$ is defined over $\mathbb{Z}$. Following Fulman-Guralnick \cite[Theorem 4.6]{FulmanGuralnick}, the number of solutions to the defining equations of the scheme $C_2(\mathfrak{sp}_4)$ in a finite field $\F_q$ of odd characteristic is given by
\[\#C_2(\mathfrak{sp}_4)(\mathbb{F}_q)=q^{12}+2q^{11}+q^{10}-q^8-2q^7-q^6+q^4\,.\]

Let $p$ be an odd prime, $q=p^r$ some power of $p$, and $\ell$ a prime not equal to $p$, and let $X:=C_2(\mathfrak{sp}_4)\times_{\Z} \bar{\F}_p$. By the Grothendieck trace formula \cite{De77}, $\#C_2(\mathfrak{sp}_4)(\mathbb{F}_q)$ is the supertrace of the $r$-th power of the geometric Frobenius $F$ on the compactly supported $\ell$-adic cohomology of $X$:
\[
\sum_{k\geq 0} (-1)^k \mathrm{Tr}(F^r \mid H^k_{\acute{e}t,c}(X,\Q_\ell)) =\#C_2(\mathfrak{sp}_4)(\mathbb{F}_q)\, .
\]
As this equality holds for any $r$, we can deduce that 
\[
p^{12},p^{11},p^{10},p^8,p^7,p^6,p^4
\]
are eigenvalues of $F$ on $\bigoplus_{k\geq 0} H^k_{\acute{e}t,c}(X,\Q_\ell)$ with signed multiplicities $(1,2,1,-1,-2,-1,1)$. This implies that
\[
\dim_{\Q_\ell} H^\ast_{\acute{e}t,c}(X,\Q_\ell)\geq 1+2+1+1+2+1+1=9\,.
\]
For all but finitely many primes $p$, the compactly supported $\ell$-adic cohomology of $X$ agrees with that of the complex variety $C_2(\mathfrak{sp}_4^\C)$, and the étale to analytic comparison map (see \cite[Ch.~III,\S3]{Mi80}) then supplies an isomorphism with singular cohomology, so that altogether
\[H^*_{\acute{e}t,c}(X,\Q_{\ell})\cong H^*_{c}(C_2(\mathfrak{sp}^\C_4),\Q_{\ell})\, .\]
We conclude that $\dim_\Q H^\ast_c(C_2(\mathfrak{sp}_4^\C),\Q)=\dim_{\Q_\ell} H^\ast_c(C_2(\mathfrak{sp}_4^\C),\Q_{\ell})\geq 9$, which is the desired lower bound.
\end{proof}

The question of what the torsion primes are for commuting varieties in a complex reductive Lie algebra remains open.

\bibliographystyle{amsalpha}
\bibliography{refs}

\end{document}